\def\Re{\mathbb R}
\providecommand{\remove}[1]{}
\theoremstyle{plain}
\newtheorem{theorem}{Theorem}[section]
\newtheorem{lemma}[theorem]{Lemma}
\newtheorem{claim}[theorem]{Claim}
\newtheorem{problem}[theorem]{Problem}
\theoremstyle{definition}
\newtheorem{definition}[theorem]{Definition}
\theoremstyle{remark}
\newtheorem{remark}[theorem]{Remark}
\newcommand{\R}{\mathcal{R}}
\newcommand{\E}{\mathcal{E}}
\newcommand{\D}{\mathcal{D}}
\newcommand{\F}{\mathcal{F}}
\begin{document}
	
	\title{Zarankiewicz's Problem via $\epsilon$-t-Nets\footnote{A preliminary version of the paper was published at the SoCG 2024 conference~\cite{KellerS24}.}}
	
%	\iffalse

	\author{%
		Chaya Keller
		\thanks{
			Department of Computer Science,
			Ariel University, Israel.
			Research partially supported by the Israel Science Foundation (grant no.~1065/20) and by the United States -- Israel Binational Science Foundation (NSF-BSF grant no.~2022792).
			\texttt{chayak@ariel.ac.il}
		}
		\and Shakhar Smorodinsky
		\thanks{Department of Computer Science, Ben-Gurion University of the NEGEV, Be'er Sheva 84105, Israel.
				Research partially supported by the Israel Science Foundation (grant no.~1065/20) and by the United States -- Israel Binational Science Foundation (NSF-BSF grant no.~2022792).
			\texttt{shakhar@bgu.ac.il}}
	}

%	\fi
	\date{}
	\maketitle

	\begin{abstract}
		
		The classical Zarankiewicz's problem asks for the maximum number of edges in a bipartite graph on $n$ vertices which does not contain the complete bipartite graph $K_{t,t}$. In one of the cornerstones of extremal graph theory,  K{\H o}v\'{a}ri, S{\'o}s and Tur{\'a}n proved an upper bound of $O(n^{2-\frac{1}{t}})$. In a celebrated result, Fox et al. obtained an improved bound of $O(n^{2-\frac{1}{d}})$ for graphs of VC-dimension $d$ (where $d<t$). Basit, Chernikov, Starchenko, Tao and Tran  improved the bound for the case of semilinear graphs. Recently, Chan and Har-Peled further improved Basit et al.'s bounds and presented (quasi-)linear upper bounds for several classes of geometrically-defined incidence graphs, including a bound of $O(n \log \log n)$ for the incidence graph of points and pseudo-discs in the plane.
		
		In this paper we present a new approach to Zarankiewicz's problem, via $\epsilon$-t-nets -- a recently introduced generalization of the classical notion of $\epsilon$-nets. We show that the existence of `small'-sized $\epsilon$-t-nets implies upper bounds for Zarankiewicz's problem. Using the new approach, we obtain a sharp bound of $O(n)$ for the intersection graph of two families of pseudo-discs, thus both improving and generalizing the result of Chan and Har-Peled from incidence graphs to intersection graphs.
		We also obtain a short proof of the $O(n^{2-\frac{1}{d}})$ bound of Fox et al., and show improved bounds for several other classes of geometric intersection graphs, including a sharp $O(n\frac{\log n}{\log \log n})$ bound for the intersection graph of two families of axis-parallel rectangles. 
	\end{abstract}
	
%	\newpage
	
	\section{Introduction} \label{sec:intro}
	
	\medskip \noindent \textbf{Zarankiewicz's problem.} 
	A central research area in extremal combinatorics is \emph{Tur\'{a}n-type questions}, which ask for the maximum number of edges in a graph on $n$ vertices that does not contain a copy of a fixed graph $H$. This research direction was initiated in 1941 by Tur\'{a}n, who showed that the maximum number of edges in a $K_r$-free graph on $n$ vertices is $(1-\frac{1}{r-1}+o(1))\frac{n^2}{2}$.
	%, obtained by a complete $(r-1)$-partite graph with equal part sizes. 
	Soon after, Erd\H{o}s, Stone and Simonovits solved the problem for all non-bipartite graphs $H$. They showed that the maximum number is $(1-\frac{1}{\chi(H)-1}+o(1))\frac{n^2}{2}$, where $\chi(H)$ is the chromatic number of $H$. 
	
	The bipartite case turned out to be significantly harder, and the question is still widely open for most bipartite graphs (see the survey~\cite{Sudakov10}). The case of $H$ being a complete bipartite graph was first studied by Zarankiewicz in 1951:
	\begin{problem}[Zarankiewicz's problem] What is the maximum number of edges in a $K_{t,t}$-free bipartite graph on $n$ vertices? 
	\end{problem}
	In one of the cornerstone results of extremal graph theory, K{\H o}v\'{a}ri, S{\'o}s and Tur{\'a}n~\cite{KST54} proved an upper bound of $O(n^{2-\frac{1}{t}})$. This bound is sharp for $t=2,3$. For $t=2$, a tightness example is the incidence graph of points and lines in a finite projective plane. 
	%$F_p$ -- i.e., a bipartite graph whose vertex sets are the points and the lines of $F_p$, where a `point vertex' is connected to a `line vertex' if the point is incident to the line. This graph is clearly $K_{2,2}$-free (as two lines cannot be incident to the same two points), its number of vertices is $n=2(p^2+p+1)$, and its number of edges is $\Theta(p^3)=\Theta(n^{2-\frac{1}{t}})$. 
	Similarly, a tightness example for $t=3$ is an incidence graph of points and spheres (of a carefully chosen fixed radius) in a three-dimensional finite affine space (see~\cite{Brown66}). The question whether the K{\H o}v\'{a}ri-S{\'o}s-Tur{\'a}n theorem is tight for $t \geq 4$ is one of the central open problems in extremal graph theory.
	
	\medskip \noindent \textbf{VC-dimension and Fox et al.'s bound for Zarankiewicz's problem.} The VC-dimension of a hypergraph is a measure of its complexity, which plays a central role in statistical learning, computational geometry, and other areas of combinatorics and computer science (see, e.g.,~\cite{AHW87,BEHW89,MV18}). The VC-dimension of a hypergraph $H=(V,\E)$ is the largest integer $d$ for which there exists $S \subset V, |S|=d$, such that for every subset $B \subset S$, one can find a hyperedge $e \in \E $ with $e \cap S=B$. Such a set $S$ is said to be \emph{shattered}. The \emph{primal shatter function} of $H$ is $\pi_H(m)= \max_{S \subset V, |S|=m}| \{S \cap e: e \in \E\}   |$, and by the Perles-Sauer-Shelah lemma (see~\cite{MATOUSEK}), if $H$ has VC-dimension $d$ then $\pi_H(m)\leq \Sigma_{i=0}^d \binom{m}{i}=O(m^d)$. The dual hypergraph of a hypergraph $H=(V,\E)$ is $H^*=(V^*,\E^*)$, where $V^*=\E$ and each $v \in V$ induces the hyperedge $e_v \in \E^*$, where $e_v=\{  e \in \E : v \in e \}$. When the VC-dimension of $H$ is $d$, the VC-dimension of $H^* $ is denoted by $d^*$. 
	
	Any bipartite graph $G=G_{A,B}$ with vertex set $V(G)=A \cup B$ and edge set $E(G) \subset A \times B$, defines two hypergraphs: the primal hypergraph $H_G=(A,\E_B)$, where $\E_B=\{  N(b):b \in B  \}$ is the collection of the open neighborhoods of the vertices in $B$, and the dual hypergraph $H_G^*=(B,\E_A)$, defined similarly. The VC-dimension of $G$ is defined as the VC-dimension of $H_G$, and the dual VC-dimension of $G$ is defined as the VC-dimension of $H_G^*$. The shatter function $\pi_G$ and the dual shatter function $\pi_G^*$ of $G$, are the shatter functions of $H_G$ and of $H_G^*$, respectively. 
	
	In a remarkable result, Fox, Pach, Sheffer, Suk and Zahl~\cite{FPSSZ17} improved the bound of the K{\H o}v\'{a}ri-S{\'o}s-Tur{\'a}n theorem for graphs with VC-dimension at most $d$ (for $d<t$). They showed:
	\begin{theorem}[\cite{FPSSZ17}]
		\label{thm:main-boundedvc}
		Let $t \geq 2$ and let $G_{A,B}$ be a bipartite graph with $|A|=m$ and $|B|=n$, satisfying $\pi_G(\ell)=O(\ell^d)$ and $\pi_G^*(\ell)=O(\ell^{d^*})$ for all $\ell$. If $G$ is $K_{t,t}$-free, then $$ |E(G)|=O_{t,d,d^*}(\min \{  mn^{1-\frac{1}{d}}+n,  nm^{1-\frac{1}{d^*}}+m  \}) .$$
	\end{theorem}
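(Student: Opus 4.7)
By the symmetric structure of the bound, my plan is to prove $|E(G)|=O(m n^{1-1/d}+n)$; the companion estimate $O(nm^{1-1/d^*}+m)$ will follow from the identical argument applied to the dual hypergraph $H_G^*$, with $d$ replaced by $d^*$. The overall strategy is a dyadic degree-decomposition on $B$, combined with a random-sampling/pigeonhole packing estimate that blends the shatter-function hypothesis with $K_{t,t}$-freeness.

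For each power of two $k$, let $B_k=\{b\in B: k\le d(b)<2k\}$ and $\mathcal{F}_k=\{N(b): b\in B_k\}$, viewed as a family of sets on $A$. The technical heart of the argument is the following \emph{key lemma}: if $\mathcal{F}$ is a family of sets on $A$ of size at least $k$, with primal shatter function $O(\ell^d)$, and no $t$ of its members share a common subset of size $\ge t$, then $|\mathcal{F}|=O_{t,d}((m/k)^d)$. To establish it, I would sample a random $R\subseteq A$ of size $r=C_t\,m/k$ for a suitable constant $C_t$. For every $F\in\mathcal{F}$ one has $\mathbb{E}|F\cap R|=|F|r/m\ge C_t$, so a Chernoff bound followed by averaging over $R$ produces a fixed $R$ for which at least half of the sets $F\in\mathcal{F}$ satisfy $|F\cap R|\ge t$; call the surviving subfamily $\mathcal{F}'$. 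The shatter-function hypothesis caps the number of distinct traces by $\pi_{\mathcal{F}}(r)=O(r^d)=O_{t,d}((m/k)^d)$. The $K_{t,t}$-freeness enters at the final step: for any trace $T\subseteq R$ with $|T|\ge t$, at most $t-1$ members of $\mathcal{F}'$ can satisfy $F\cap R=T$, since otherwise any $t$ such sets together with any $t$ elements of $T$ would form a $K_{t,t}$ in $G$. Multiplying these two bounds controls $|\mathcal{F}'|$, and hence $|\mathcal{F}|$, by $O_{t,d}((m/k)^d)$.

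Applied to each $\mathcal{F}_k$, the key lemma yields $|B_k|=O((m/k)^d)$, so the bucket $B_k$ contributes $O(m^d/k^{d-1})$ edges. I would then set the threshold $k^{\ast}=m/n^{1/d}$ and split the sum: buckets with $k\le k^{\ast}$ are estimated via the trivial $|B_k|\cdot k\le nk$, while buckets with $k>k^{\ast}$ are estimated via the lemma. Both geometric sums are dominated by the value at $k=k^{\ast}$, which equals $mn^{1-1/d}$, and the degree-$0$ and $k=1$ contributions are absorbed by the additive $+n$.

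The main obstacle is the key lemma, and within it the delicate calibration is the choice of sample size $r$: it must be simultaneously large enough that a typical set of size $\ge k$ meets $R$ in $\ge t$ points (so that $K_{t,t}$-freeness can cap the multiplicity of each trace), yet small enough that $\pi_{\mathcal{F}}(r)=O(r^d)$ gives the sharp bound; the choice $r=\Theta_t(m/k)$ hits both targets. Once the lemma is in hand, the dyadic summation is routine and the symmetric companion bound follows verbatim from the dual hypergraph.
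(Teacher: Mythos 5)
You should first note that the paper never proves Theorem~\ref{thm:main-boundedvc}: it is quoted from~\cite{FPSSZ17}, and what the paper proves instead is the nearby Theorem~\ref{thm:VCour}, by a one-shot $\epsilon$-$t$-net argument -- fix $\epsilon=\Theta_{t,d^*}(m^{-1/d^*})$, observe that every ``heavy'' vertex of $B$ has its neighborhood containing a $t$-tuple of the net given by Theorem~\ref{thm:eps-t-net} while each $t$-tuple serves at most $t-1$ neighborhoods, and add the light-vertex contribution -- which loses the sharp additive term ($n^{1+1/d}\log n$ in place of $n$). Your route is genuinely different and does yield the stated bound: a dyadic decomposition of $B$ by degree combined with a shallow-packing lemma, proved by sampling $r=\Theta_t(m/k)$ points of $A$ so that most neighborhoods of size at least $k$ meet the sample in at least $t$ points, counting traces via $\pi_G(r)=O(r^d)$, and capping the multiplicity of each trace of size at least $t$ by $t-1$ using $K_{t,t}$-freeness. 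This is closer in spirit to the original argument of Fox et al., which (as recalled in Appendix~\ref{App:Fox}) repeatedly extracts a single vertex of degree $O(nm^{-1/d^*})$ and deletes it, whereas you bound all degree classes simultaneously; the trade-off is that your packing lemma must be proved by hand, while the paper's proof is a few lines given Theorem~\ref{thm:eps-t-net}, but your method recovers the clean $mn^{1-1/d}+n$, which the $\epsilon$-$t$-net method does not. A few boundary points to tidy: apply the lemma to the multiset $\{N(b):b\in B_k\}$ (or add the remark that a fixed neighborhood of size at least $t$ can repeat at most $t-1$ times) so that it bounds $|B_k|$ and not merely the number of distinct neighborhoods; charge the buckets with $k=O(t)$, where $r>m$ or $|N(b)|<t$ can occur, to the trivial $O_t(n)$ term; and note that the geometric summation over $k>k^*$ requires $d\ge 2$ (for $d=1$ it produces an extra $\log m$ factor), which is anyway the regime of interest.
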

	Theorem~\ref{thm:main-boundedvc} spawned several follow-up papers. Janzer and Pohoata~\cite{JP20} obtained an improved bound of $o(n^{2-\frac{1}{d}})$ for graphs with VC-dimension $d$, where $m = n$ and $t \geq d>2$, using the hypergraph removal lemma~\cite{Gowers07}. Do~\cite{Do19} and Frankl and Kupavskii~\cite{FK21} obtained improved bounds when $t$ tends to infinity with $n$.
	
	\medskip \noindent \textbf{Improved bounds for Zarankiewicz's problem for incidence graphs.} An \emph{incidence graph} is a bipartite graph whose vertex set is a union of a set of points and a set of geometric objects, where the edges connect points to objects to which they are incident. Problems on incidence graphs are central in combinatorial and computational geometry. For example, the classical Erd\H{o}s' unit distances problem asks for an upper bound on the number of edges in an incidence graph of points and unit circles. %Furthermore, they are closely related to algorithmic problems in computational geometry, such as range searching and Hopcroft's problem (see~\cite{AE99,CZ22}).
	
	Incidence graphs are naturally related to Zarankiewicz's problem. Indeed, the incidence graph of points and lines is $K_{2,2}$-free, and thus, the K{\H o}v\'{a}ri-S{\'o}s-Tur{\'a}n theorem implies that its number of edges is $O(n^{3/2})$. While this bound is tight for the incidence graph of the finite projective plane, the classical Szemer\'{e}di-Trotter theorem (see~\cite{MATOUSEK}) asserts that a stronger bound of $O(n^{4/3})$ holds in the real plane. 
	
	Motivated by this relation, Basit, Chernikov, Starchenko, Tao, and Tran~\cite{BCS+21} studied incidence graphs of points and axis-parallel boxes in $\mathbb{R}^d$, under the additional assumption that they are $K_{t,t}$-free. They obtained an $O(n \log^{2d} n)$ bound in $\mathbb{R}^d$, and a sharp $O(n\frac{\log n}{\log \log n})$ bound for dyadic axis-parallel rectangles in the plane. Related incomparable results were independently obtained by Tomon and Zakharov~\cite{TZ21}; see below.
	
	Recently, Chan and Har-Peled~\cite{CH23} initiated a systematic study of Zarankiewicz's problem for incidence graphs of points and~various geometric objects. They obtained an $O(n(\frac{\log n}{\log \log n})^{d-1})$ bound for the incidence graph of points and axis-parallel boxes in $\mathbb{R}^d$ and observed that a matching lower bound construction appears in a classical paper of Chazelle (\cite{Chazelle90}; see also \cite{Tomon23}). They also obtained an $O(n \log \log n)$ bound for points and pseudo-discs in the plane, and bounds for points and halfspaces, balls, shapes with `low union complexity', and more. The proofs in~\cite{CH23} use a variety of techniques, including shallow cuttings, a geometric divide-and-conquer, and biclique covers.
	
	\medskip \noindent \textbf{$\epsilon$-nets and $\epsilon$-$t$-nets.} Given a hypergraph $H=(V,\E)$ and $\epsilon>0$, an \emph{$\epsilon$-net} for $H$ is a set $S \subset V$ such that any hyperedge $e \in \E$ of size $\geq \epsilon |V|$ contains a vertex from $S$. The notion of $\epsilon$-nets was introduced by Haussler and Welzl~\cite{hw-ensrq-87} who proved that any finite hypergraph with VC-dimension $d$ admits an $\epsilon$-net of size $O((d/\epsilon)\log(d/\epsilon))$ (a bound that was later improved to $O((d/\epsilon)\log(1/\epsilon))$ in~\cite{KPW92}). $\epsilon$-nets were studied extensively and have found applications in diverse areas of computer science,
	including machine learning, algorithms, computational geometry, and social choice (see, e.g.,~\cite{ABKKW06,AFM18,BEHW89,Chan18}).
	
	In 2020, Alon et al.~\cite{AJKSY22} introduced the following notion of \emph{$\epsilon$-$t$-nets}, generalizing $\epsilon$-nets and the notion of \emph{$\epsilon$-Mnets} that was studied by Mustafa and Ray~\cite{MustafaR17} and by Dutta et al.~\cite{DuttaGJM19}:
	\begin{definition}
		Let $\epsilon \in (0,1)$ and $t \in \mathbb{N} \setminus \{0\}$ be fixed parameters, and let 
		$H=(V,\E)$ be a hypergraph on $n$ vertices. A set $N \subset \binom{V}{t}$ of $t$-tuples of vertices is called an \emph{$\epsilon$-$t$-net} if any hyperedge $e \in \E$ with $|e| \geq \epsilon n$ contains at least one of the t-tuples in $N$.
	\end{definition}
	%-- sets $S$ of $t$-tuples of vertices of $H$ such that any hyperedge $e \in \E$ of size $\geq \epsilon |V|$ fully contains a $t$-tuple from $S$. 
	Alon et al. \cite{AJKSY22} proved the following:
	\begin{theorem}\label{thm:eps-t-net}
		For every $\epsilon \in (0,1)$, $C>0$, and $t,d,d^* \in \mathbb{N} \setminus \{0\}$, there exists $C_1=C_1(C,d^*)$ such that the following holds. Let $H$ be a hypergraph on at least $C_1((t-1)/\epsilon)^{d^*}$vertices with VC-dimension $d$ and dual shatter function $\pi^*_H(m) \leq C \cdot m^{d^*}$. Then $H$ admits an $\epsilon$-$t$-net of size $O((d(1+\log t)/\epsilon)\log(1/\epsilon))$, all of which elements are pairwise disjoint.	 
	\end{theorem}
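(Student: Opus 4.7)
The approach I would take is a two-level one that separates the ``cover every large hyperedge'' task, to be handled by a classical $\epsilon$-net, from the ``produce a $t$-tuple inside each large hyperedge'' task, to be handled locally via a preliminary clustering that uses the dual shatter function. First, using a Matou\v{s}ek-style partition driven by the assumption $\pi^*_H(m) \leq C m^{d^*}$, I would partition $V$ into pairwise disjoint groups $V_1,\ldots,V_k$ of roughly equal size $s = n/k$, with $k$ chosen so that $s \geq t$ (this is where the hypothesis $n \geq C_1((t-1)/\epsilon)^{d^*}$ enters) and so that the total number of ``crossings'' -- pairs $(e,V_i)$ where $V_i$ properly intersects but is not contained in $e$ -- is bounded. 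The point of such a partition is that a ``typical'' hyperedge $e$ of size $\geq \epsilon n$ contains many entire groups, not just scattered vertices.

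Next I would contract each group $V_i$ to a single super-vertex, forming a quotient hypergraph $H'$ on $k$ vertices in which a hyperedge $e \in \E$ becomes $\{i : V_i \subseteq e\}$. Because any shattered set of super-vertices in $H'$ pulls back to a shattered set of ordinary vertices in $H$, the VC-dimension of $H'$ is still at most $d$. By the crossing bound from the previous step, every $H$-hyperedge of size $\geq \epsilon n$ corresponds to an $H'$-hyperedge of size $\geq \epsilon' k$ for an appropriate $\epsilon' = \Theta(\epsilon)$. Applying the classical Haussler--Welzl theorem to $H'$ yields an $\epsilon'$-net $N' \subseteq \{1,\ldots,k\}$ of size $O((d/\epsilon)\log(1/\epsilon))$. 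Picking any $t$ elements from each $V_i$ with $i \in N'$ and packaging them as $t$-tuples gives a family $N$ of pairwise disjoint $t$-tuples; by construction, any $e$ with $|e| \geq \epsilon n$ contains some $V_i$ with $i \in N'$, hence contains the corresponding $t$-tuple of $N$.

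The main obstacle, and the apparent source of the $(1+\log t)$ factor, is that one cannot simply take groups of size exactly $t$: if the groups are too small, the dual-shatter-function partition produces too many crossings, and the implication ``$|e| \geq \epsilon n$ in $H$ implies $|e| \geq \epsilon' k$ in $H'$'' breaks down, while making the groups too large forces $\epsilon'$ up. Balancing these two pressures, and in particular handling the regime where $t$ is comparable to $1/\epsilon$, leads me to expect that a clean bound is obtained by iterating the partition-and-contract procedure $O(\log t)$ times: at each stage the ``remaining target'' shrinks from producing a $t'$-tuple to producing a $(t'/2)$-tuple, each stage contributes an additive $O((d/\epsilon)\log(1/\epsilon))$ to the net size, and after $O(\log t)$ stages one is reduced to the classical $\epsilon$-net case. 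The delicate technical core of the argument, as I see it, is maintaining pairwise disjointness of the accumulated $t$-tuples across all $O(\log t)$ rounds while simultaneously keeping the effective $\epsilon$ essentially unchanged.
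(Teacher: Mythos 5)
First, note that this paper does not prove the statement at all: Theorem~\ref{thm:eps-t-net} is imported verbatim from Alon et al.~\cite{AJKSY22}, so your proposal has to be measured against that proof. Your two-level skeleton -- partition $V$ into pairwise disjoint blocks of size $\Theta(t)$ with few ``split'' blocks per hyperedge (this is where $\pi^*_H(m)\le Cm^{d^*}$ and the hypothesis $n\ge C_1((t-1)/\epsilon)^{d^*}$ enter), contract blocks to super-vertices, put a classical $\epsilon$-net on the quotient, and output one $t$-subset of each selected block -- is indeed essentially the route taken there, with the partition obtained by cutting a Chazelle--Welzl/Welzl spanning path of crossing number $O(n^{1-1/d^*})$ (which exists under the dual-shatter hypothesis alone) into consecutive blocks; the vertex-count hypothesis is exactly what guarantees that the at most $O(n^{1-1/d^*})$ blocks split by a heavy edge contain at most half of its $\ge \epsilon n$ vertices, so a heavy edge fully contains $\Omega(\epsilon k)$ blocks.

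The genuine gap is your claim that the quotient hypergraph $H'$ has VC-dimension at most $d$ because ``any shattered set of super-vertices pulls back to a shattered set of ordinary vertices.'' This is false: if $\{V_{i_1},\dots,V_{i_r}\}$ is shattered by the sets $\{i: V_i\subseteq e\}$, then for $i\notin B$ the edge $e_B$ merely misses \emph{some} vertex of $V_i$, and that witness varies with $B$; choosing one representative per block therefore does not produce a shattered set in $H$. The correct statement, and the actual source of the $(1+\log t)$ factor in the theorem, is obtained by counting traces on the union $U$ of the $r$ shattered blocks: distinct $B$'s give distinct traces $e_B\cap U$, so $2^r\le \pi_H(|U|)=O((rs)^d)$ by Sauer--Shelah, whence $r=O(d(1+\log s))=O(d(1+\log t))$ for blocks of size $s=\Theta(t)$; applying Haussler--Welzl to $H'$ then gives exactly the claimed bound $O((d(1+\log t)/\epsilon)\log(1/\epsilon))$. (A sanity check that something was off: if your VC-bound were true, one round of your argument would yield an $\epsilon$-$t$-net of size $O((d/\epsilon)\log(1/\epsilon))$ with no dependence on $t$, strictly stronger than the theorem.) Consequently, the $O(\log t)$ rounds of partition-and-contract you invoke to manufacture the $\log t$ factor are both unnecessary and not actually a proof as sketched -- the ``shrink the target from $t'$ to $t'/2$'' step, the preservation of disjointness across rounds, and the control of the effective $\epsilon$ are all left unspecified. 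With block size $\Theta(t)$ and the trace-counting bound above, a single round suffices and the delicate balancing you worry about disappears.
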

	In addition, the paper \cite{AJKSY22} studied the existence of small-sized $\epsilon$-$t$-nets in various geometric settings in which it is known that the classical $\epsilon$-net has size $O(1/\epsilon)$. In particular, they showed that the intersection hypergraph of two families of pseudo-discs and the dual incidence hypergraph of points and a family of
	regions with a linear union complexity, admit $\epsilon$-$2$-nets of size $O(1/\epsilon)$, provided that they have at least $2/\epsilon$ vertices.
	
	\subsection{Our results}

	\medskip \noindent \textbf{Zarankiewicz's problem via $\epsilon$-$t$-nets.} The basic observation underlying our results is a surprising connection between $\epsilon$-$t$-nets and Zarankiewicz's problem. Consider a bipartite graph $G=G_{A,B}=(A \cup B, E)$ where $|A|=m,|B|=n$, and assume that for some $\epsilon$, the primal hypergraph $H_G=(A,\E_B)$ (recall that $\E_B=\{  N(b):b \in B  \}$ is the collection of the open neighborhoods of the vertices in $B$) admits an $\epsilon$-$t$-net $S$ of size $s$. We can partition the vertex set $B$ into the set $B'$ of `heavy' vertices that have degree at least $\epsilon m$ in $G$, and the set $B''$ of `light' vertices that have degree less than $\epsilon m$.  
	
	We observe that the neighborhood $N(b)$ of any `heavy' vertex $b$ must contain a $t$-tuple from $S$. Since $G$ is $K_{t,t}$-free, any $t$-tuple from $S$ is contained in at most $t-1$ neighborhoods $N(b_i)$. Hence, the number of `heavy' vertices is at most $s(t-1)$. This immediately yields the bound
	\begin{equation*}\label{Eq:First-step-intro}
		|E(G)| \leq n \lfloor \epsilon m \rfloor + s(t-1)m, 
	\end{equation*}
	where the first term is the contribution of the `light' vertices and the second term is the contribution of the `heavy' vertices.
	
	%If the dual hypergraph $H_G^*=H(B,\E_A)$ admits a small-sized $\epsilon'$-$t$-net $S'$ of size $s'$ as well (possibly with different $\epsilon',s'$), then we can repeat the procedure described above and divide the vertex set $A$ into the set $A'$ of `heavy' vertices that have at least $\epsilon' n$ neighbors and the set $A''$ of `light' vertices that have less than $\epsilon' n$ neighbors. By the same argument as above, $|A'|\leq s'(t-1)$. Hence, we can obtain the bound  
	%\[
	%|E(G)| \leq n(\epsilon m) + m(\epsilon' n) + ss'(t-1)^2, 
	%\]
	%where the first term bounds the contribution of the `light' vertices from $B$, the second term bounds the contribution of the `light' vertices from $A$, and the third term bounds the contribution of the `heavy'-`heavy' edges.
	
	%Finally, we can further reduce the third term by observing that the set of `heavy'-`heavy' edges is the edge set of the induced subgraph of $G$ on the vertex set $A' \cup B'$, which is $K_{t,t}$-free, and thus, we may be able to apply to it the above partioning once again, provided that the corresponding primal and dual hypergraphs admit small-sized $\epsilon$-$t$-nets.
	
	Building upon that and several more observations, we develop a \emph{recursive approach} which allows obtaining bounds for Zarankiewicz's problem using results on $\epsilon$-$t$-nets (see Theorem~\ref{thm:recursive} below).
	
	\medskip \noindent \textbf{A short proof of Fox et al.'s bound.} Our first application of the $\epsilon$-$t$-net approach is a short proof of the bound of Fox, Pach, Sheffer, Suk and Zahl~\cite{FPSSZ17} (Theorem~\ref{thm:main-boundedvc} above). By combining the strategy described above with Theorem~\ref{thm:eps-t-net}, we prove:
	\begin{theorem}\label{thm:VCour}
		Let $t \geq 2$ and let $G_{A,B}$ be a bipartite graph with $|A|=m$ and $|B|=n$, satisfying $\pi_G(\ell)=O(\ell^d)$ and $\pi_G^*(\ell)=O(\ell^{d^*})$ for all $\ell$. If $G$ is $K_{t,t}$-free, then we have $$ |E(G)|=O_{t,d,d^*}(\min \{  mn^{1-\frac{1}{d}}+n^{1+\frac{1}{d}} \log n,  nm^{1-\frac{1}{d^*}}+m^{1+\frac{1}{d^*}} \log m  \}) .$$	
	\end{theorem}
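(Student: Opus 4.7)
The plan is to apply the $\epsilon$-$t$-net strategy sketched in the introduction, use Theorem~\ref{thm:eps-t-net} to produce the nets, and then reduce the proof to a one-parameter optimization.

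First, I would apply Theorem~\ref{thm:eps-t-net} to the primal hypergraph $H_G=(A,\E_B)$, whose VC-dimension is at most $d$ and whose dual shatter function is $\pi_G^*(\ell)=O(\ell^{d^*})$. The theorem yields, for every $\epsilon$ with $m\ge C_1((t-1)/\epsilon)^{d^*}$, an $\epsilon$-$t$-net $S\subset\binom{A}{t}$ of size $s=O_{t,d}\!\left(\frac{\log(1/\epsilon)}{\epsilon}\right)$. Split $B$ into the `heavy' set $B'=\{b:|N(b)|\ge\epsilon m\}$ and the `light' set $B''=B\setminus B'$. By the $\epsilon$-$t$-net property, each $b\in B'$ has some $t$-tuple of $S$ inside $N(b)$; by $K_{t,t}$-freeness, each $t$-tuple of $S$ is contained in at most $t-1$ of the neighborhoods $N(b)$. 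Hence $|B'|\le(t-1)s$, which gives
\[
 |E(G)|\;\le\;|B'|\cdot m + |B''|\cdot\epsilon m\;=\;O_{t,d}\!\left(\frac{m\log(1/\epsilon)}{\epsilon}\right)+\epsilon mn.
\]

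Choosing $\epsilon$ to be the smallest value permitted by the size hypothesis of Theorem~\ref{thm:eps-t-net}, namely $\epsilon=\Theta_{t,d^*}(m^{-1/d^*})$, produces the bound $O_{t,d,d^*}(nm^{1-1/d^*}+m^{1+1/d^*}\log m)$, which is the second expression inside the $\min$. The other expression follows by a symmetric argument applied to the dual hypergraph $H_G^*=(B,\E_A)$, whose primal VC-dimension is at most $d^*$ and whose dual shatter function is $\pi_G(\ell)=O(\ell^d)$; one partitions $A$ by the threshold $\epsilon n$ and sets $\epsilon=\Theta_{t,d}(n^{-1/d})$, obtaining $O_{t,d,d^*}(mn^{1-1/d}+n^{1+1/d}\log n)$. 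Taking the minimum of the two bounds completes the proof.

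The only subtle point is the regime in which $m$ (or, symmetrically, $n$) is too small for any $\epsilon\le 1$ to satisfy the size hypothesis of Theorem~\ref{thm:eps-t-net}, namely when $m<C_1(t-1)^{d^*}$. There $m$ is bounded by a constant depending only on $t$ and $d^*$, and the trivial bound $|E(G)|\le mn$ is already absorbed in the term $nm^{1-1/d^*}$, so nothing is lost. I do not anticipate a serious obstacle beyond this bookkeeping: once the combinatorial observation $|B'|\le(t-1)s$ is in place, Theorem~\ref{thm:eps-t-net} plugs in essentially as a black box, and the whole argument reduces to the optimization above.
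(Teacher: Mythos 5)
Your proposal is correct and follows essentially the same route as the paper's proof: apply Theorem~\ref{thm:eps-t-net} to $H_G$ with the smallest admissible $\epsilon=\Theta_{t,d^*}(m^{-1/d^*})$, bound the number of heavy vertices by $(t-1)$ times the net size via $K_{t,t}$-freeness, sum the degrees of heavy and light vertices, and argue symmetrically on $H_G^*$ for the other term of the $\min$. Your extra remark handling the regime $m<C_1(t-1)^{d^*}$ is harmless bookkeeping that the paper leaves implicit.
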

	%Consider a bipartite graph $G=(A \cup B,E)$ whose primal hypergraph $H_G=(A,\E_B)$ has VC-dimension $d$. By Theorem~\ref{thm:eps-t-net}, $H_G$ admits an $\epsilon$-$t$-net of size $O((d(1+\log t)/\epsilon)\log(1/\epsilon))$, for any $\epsilon \geq Cm^{-1/d^*}$, where $C=C(t,d,d^*)$.  Hence,~\eqref{Eq:First-step-intro} yields the bound
	%\[
	%|E(G)| \leq O(nm^{1-\frac{1}{d^*}} + dt \log t \cdot m^{1+\frac{1}{d^*}}\log m),
	%\] 
	The bound of Theorem~\ref{thm:VCour} matches the bound of Theorem~\ref{thm:main-boundedvc}, whenever $d,d^*>2$ and $n,m$ do not differ `too much'.
	Interestingly, when $n=m$ and $d^*=d$, our proof strategy can be combined with the proof strategy of Fox et al.~\cite{FPSSZ17} to obtain a slightly better bound (see Appendix~\ref{App:Fox}). 
	
	\medskip \noindent \textbf{Zarankiewicz's problem for intersection graphs.} The \emph{intersection graph} of a family $\mathcal{F}$ of geometric objects is a graph whose vertex set is $\mathcal{F}$, and whose edges connect pairs of objects whose intersection is non-empty.  In the general (i.e., non-bipartite) setting, $K_t$-free intersection graphs of geometric objects were studied extensively, and have applications to the study of quasi-planar topological graphs (see, e.g.,~\cite{FoxP12}). 
	
	Generalizing the systematic study of Zarankiewicz's problem for incidence graphs initiated by Chan and Har-Peled~\cite{CH23}, we study Zarankiewicz's problem for \emph{bipartite} intersection graphs of geometric objects -- i.e., the maximum number of edges in a $K_{t,t}$-free graph $G_{A,B}=(A \cup B,E)$, where $A,B$ are families of geometric objects, and objects $x \in A, y\in B$ are connected by an edge if their intersection is non-empty. Obviously, incidence graphs are the special case where $A$ consists of a set of points. 
	
	It is important to note that this setting (i.e., bipartite intersection graphs) is different from the (standard) intersection graph of the family $A \cup B$, in which intersections inside $A$ and inside $B$ are also taken into account. The stark difference is exemplified well in the case of families $A,B$ of segments in the plane. If the bipartite intersection graph of $A,B$ is $K_{2,2}$-free, the results of Fox et al.~\cite{FPSSZ17} imply the upper bound $O(n^{4/3})$ on its number of edges, and this bound is tight, in view of the tightness examples of the Szemer\'{e}di-Trotter theorem. On the other hand, if the (standard) intersection graph of $A \cup B$ is $K_{2,2}$-free, then the results of Fox and Pach~\cite{FP08} imply an upper bound of $O(n)$ on its number of edges (see also~\cite{MustafaP16}). In fact, an $O(n)$ upper bound was obtained by Fox and Pach~\cite{FP14} even in the much more general case of string graphs. Our setting is the natural generalization of incidence graphs, in which only point-object incidences are taken into account, but not intersections between the objects.

	\medskip \noindent \textbf{A sharp bound for Zarankiewicz's problem for intersection graphs of pseudo-discs.} A \emph{family of pseudo-discs} is a family of simple closed Jordan regions in the plane such that the boundaries of any two regions intersect in at most two points. For example, a family of homothets (scaled translation copies) of a given convex body in the plane is a family of pseudo-discs. As a second application of the strategy described above, we obtain a linear upper bound for Zarankiewicz's problem for the intersection graph of two families of pseudo-discs.
	
	\begin{theorem}\label{thm:pd-intro}
		Let $t \geq 2$ and let $G=G_{A,B}$ be the bipartite intersection graph of families $A,B$ of pseudo-discs, with $|A|=|B|=n$. If $G$ is $K_{t,t}$-free then $|E(G)|=O(t^6n)$.
	\end{theorem}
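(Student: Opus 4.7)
The plan is to combine two ingredients in a recursive framework: first, an $\epsilon$-$t$-net theorem for the intersection hypergraph of two pseudo-disc families whose size is linear in $1/\epsilon$ and polynomial in $t$; and second, the Zarankiewicz recursion encoded in Theorem~\ref{thm:recursive}, driven by the heavy/light partition already described in the introduction.

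The first (main) technical step is to establish the following $\epsilon$-$t$-net statement: there is a constant $\alpha$ such that for any two families $A,B$ of pseudo-discs with $|A| \geq \Omega(t^\alpha/\epsilon)$, the intersection hypergraph $H_G = (A, \{N(b) : b \in B\})$ admits an $\epsilon$-$t$-net of size $O(t^\alpha/\epsilon)$. The $t=2$ case is the Alon-Jartoux-Keller-Smorodinsky-Yuditsky result \cite{AJKSY22}, which crucially exploits the linear union complexity of pseudo-discs. To extend to general $t$, I would iteratively refine $\epsilon$-$2$-nets: by carefully sampling additional elements based on the $\epsilon'$-$2$-net structure at several scales, one ensures that any hyperedge of size $\geq\epsilon m$ contains not merely a pair but a whole $t$-tuple from the augmented sample. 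The linear union complexity of pseudo-discs keeps the size growth polynomial in $t$. Note that a direct invocation of Theorem~\ref{thm:eps-t-net} would give size $O((\log t/\epsilon)\log(1/\epsilon))$, and the extra $\log(1/\epsilon)$ factor would propagate to a non-linear dependence on $n$ in the final bound, so a dedicated argument for pseudo-discs is essential.

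Given such an $\epsilon$-$t$-net of size $s = O(t^\alpha/\epsilon)$, the recursion proceeds as follows. Split $B$ into the set $B''$ of vertices of degree less than $\epsilon m$ and the set $B'$ of the remaining heavier vertices. The contribution of $B''$ to $|E(G)|$ is at most $n \epsilon m$. Every $b \in B'$ has $|N(b)| \geq \epsilon m$, so $N(b)$ contains a $t$-tuple from the $\epsilon$-$t$-net; by $K_{t,t}$-freeness, each such $t$-tuple lies in $N(b)$ for at most $t-1$ choices of $b$, so $|B'| \leq s(t-1) = O(t^{\alpha+1}/\epsilon)$. Instead of bounding the edges incident to $B'$ by the trivial $|B'|\cdot m$, I would recurse: the subgraph $G[A \cup B']$ is still $K_{t,t}$-free and is still a pseudo-disc intersection graph, so its edge count is at most $Z(m,|B'|,t)$, where $Z(\cdot,\cdot,t)$ denotes the sought Zarankiewicz bound for such graphs. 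Choosing $\epsilon$ so that $s(t-1)=n/2$ (i.e.\ $\epsilon \asymp t^{\alpha+1}/n$) gives
\begin{equation*}
  Z(m,n,t) \;\leq\; O(t^{\alpha+1})\cdot m \;+\; Z(m, n/2, t).
\end{equation*}
The same argument applied to the dual hypergraph $H_G^{*}=(B,\{N(a):a\in A\})$, which is again a pseudo-disc intersection hypergraph, yields $Z(m,n,t) \leq O(t^{\alpha+1})\cdot n + Z(m/2, n, t)$. Alternating these two inequalities halves $n$ and $m$ in turn, and summing the resulting geometric series gives $Z(n,n,t) = O(t^{\alpha+1} n)$; with $\alpha=5$ this is the claimed $O(t^6 n)$.

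The principal obstacle is the first step: proving the pseudo-disc $\epsilon$-$t$-net bound with only $1/\epsilon$ (no logarithmic overhead) and polynomial $t$-dependence. The precise exponent $\alpha$ in the $\epsilon$-$t$-net size translates directly into the $t$-exponent in the Zarankiewicz bound, so any savings in $\alpha$ immediately improve the final constant. A secondary care concerns the base case of the recursion: the $\epsilon$-$t$-net only exists when the hypergraph has at least $\Omega(t^\alpha/\epsilon)$ vertices, so once one side shrinks below this threshold the recursion terminates with a trivial estimate, which is absorbed in the $O(t^6 n)$ bound.
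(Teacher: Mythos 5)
Your recursive half is sound and essentially coincides with the paper's: the paper also chooses the heaviness threshold so that the number of heavy vertices drops to $n/2$ (Lemma~\ref{lem:pd} with $\ell=2Ct^6$) and closes the bound by induction, exactly the geometric-series argument you describe, including the trivial treatment of the base case. The problem is the first step, which you yourself flag as the principal obstacle but then only gesture at. An $\epsilon$-$t$-net is a designated family of $t$-tuples, and the scheme you sketch --- layering $\epsilon$-$2$-nets ``at several scales'' so that every heavy hyperedge contains $t$ sampled vertices --- only yields a vertex set $S$ of size $O(t/\epsilon)$ that every heavy hyperedge meets in at least $t$ points. That is precisely the easy half of the paper's proof of Theorem~\ref{thm:pseudo_discs} (done there by taking $t$ successive $\frac{\epsilon}{2}$-nets on the leftover vertices). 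The hard half, which your proposal does not address, is converting this into a family of $t$-tuples of size $O(\mathrm{poly}(t)/\epsilon)$: taking all realized $t$-subsets of $S$ gives a priori up to $\binom{|S|}{t}$ tuples, exponential in $t$, and nothing in your sketch prevents that. The paper's mechanism is a shallow-hyperedge counting argument: by Keszegh's result~\cite{Kes17+} the Delaunay graph of the intersection hypergraph of two pseudo-disc families is hereditarily planar and the hypergraph has VC-dimension at most $4$, so Theorem~\ref{thm:AKP} bounds the number of hyperedges of size at most $t$ in any $m$-vertex induced subhypergraph of $H_S$ by $O(t^3m)$; a greedy vertex-removal process (always deleting a vertex lying in $O(t^4)$ such hyperedges, and recording the $t$-sized hyperedges through it) then produces the net of size $O(t^5/\epsilon)$, with correctness certified by watching the step at which $|e\cap S|$ drops from $t$ to $t-1$. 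Without this ingredient (or a substitute for it) your $\epsilon$-$t$-net claim with polynomial $t$-dependence is unproven, and the whole bound collapses to the asserted-but-unestablished exponent $\alpha$.

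A smaller point: your appeal to the linear union complexity of pseudo-discs is the wrong lever for this hypergraph. Union complexity is what drives the point/region (dual incidence) setting in~\cite{AJKSY22}; for the intersection hypergraph of \emph{two families} of pseudo-discs the relevant structural fact is the planarity of the Delaunay graph, which is also what makes the argument extend to non-piercing regions in Section~\ref{rem:non-piercing-regions}. So even as a heuristic, the route you indicate points away from the tool that actually closes the gap.
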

	In fact, we show that the assertion of Theorem~\ref{thm:pd-intro} holds (with a slightly weaker bound of $O(t^8n)$) for a wider class of bipartite intersection graphs of any two families of so-called \emph{non-piercing regions} -- namely, families $\mathcal{F}$ of regions in the plane such that for any $S,T \in \mathcal{F}$, the region $S \setminus T$ is connected.
	
	Theorem~\ref{thm:pd-intro} improves and generalizes a result of Chan and Har-Peled~\cite{CH23} who obtained an upper bound of $O(n\log \log n)$ for the incidence graph of points and~pseudo-discs. 
	%In the special setting of~\cite{CH23}, of points and pseudo-discs, we present also a proof of the stronger bound $O(t^4 n)$.   
	
	In order to prove Theorem \ref{thm:pd-intro} we show that the primal and the dual hypergraphs of $G$ admit $\epsilon$-$t$-nets of size $O_t(1/\epsilon)$ for all $ n \geq \frac{2t}{\epsilon}$ (see Theorem~\ref{thm:pseudo_discs}). Thus, we also extend the results of~\cite{AJKSY22}, and believe that this might be of independent interest.
	
	Theorem~\ref{thm:pd-intro} demonstrates the added value of the new $\epsilon$-$t$-net approach over previous approaches that used shallow cuttings. There are settings, like intersection graphs of pseudo-discs, for which one can show the existence of a linear-sized $\epsilon$-$t$-net, while the existence of shallow cuttings is not known. In such settings, the $\epsilon$-$t$-net approach yields stronger bounds than previous techniques.
	
	An interesting problem which is left open is whether the dependence on $t$ in Theorem~\ref{thm:pd-intro} can be improved. It seems that the right dependence should be linear, like in the bounds of Chan and Har-Peled~\cite{CH23}.  
	
	%To prove the theorem, we first extend the results of~\cite{AJKSY22} to show that the primal and the dual hypergraphs of $G$ admit $\epsilon$-$t$-nets of size $O_t(1/\epsilon)$ for all $\epsilon \geq tm$ (resp., all $\epsilon \geq tn$). As the properties of being a family of pseudo-discs and of being $K_{t,t}$-free are hereditary, this allows us to apply all three steps of the strategy described above, with appropriately chosen values of $\epsilon,\epsilon'$, to obtain a recursive formula of the form
	%\[
	%|f(m,n)| \leq O_t(1) (m+n) + f(\frac{m}{2},\frac{n}{2}),
	%\]
	%where $f(m,n)$ is the maximum number of edges in a graph that satisfies the hypothesis of the theorem. The recursive formula directly yields the assertion of the theorem. 

	\medskip \noindent \textbf{A sharp bound for intersection graphs of axis-parallel rectangles.} The main class of incidence graphs studied in the previous papers~\cite{BCS+21,CH23,TZ21} is incidence graphs of points and~axis-parallel rectangles (and more generally, axis-parallel boxes in $\mathbb{R}^d$). 
	
	Tomon and Zakharov~\cite{TZ21} studied the related notion of $K_{t,t}$-free intersection graphs of two families of axis-parallel boxes, under the stronger assumption that a $K_{t,t}$ including intersections inside $A$ and $B$ is also forbidden. They obtained a bound of $O(n \log^{2d+3} n)$ for $K_{t,t}$-free intersection graphs of axis-parallel boxes in $\mathbb{R}^d$, as well as a bound of $O(n \log n)$ for $K_{2,2}$-free incidence graphs of points and axis-parallel rectangles in the plane.
	
	We obtain the following essentially tight bound:
	\begin{theorem}\label{thm:rect-intro}
		Let $t \geq 2$, and let $G=G_{A,B}$ be the bipartite intersection graph of families $A,B$ of axis-parallel rectangles in general position\footnote{Here general position means that no two edges of rectangles in $A \cup B$ lie on the same vertical or horizontal line.}, with $|A|=|B|=n$. If $G$ is $K_{t,t}$-free, then $|E(G)|=O\left(t^6 n \frac{\log n}{\log \log n}\right)$.
	\end{theorem}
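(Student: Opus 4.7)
The plan is to combine a tailored $\epsilon$-$t$-net bound for the intersection hypergraph of two families of axis-parallel rectangles with the recursive framework of Theorem~\ref{thm:recursive}.

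First, I would show that for any two finite families $A,B$ of axis-parallel rectangles in $\mathbb{R}^2$ in general position, with $|A\cup B|$ sufficiently large in terms of $\epsilon$ and $t$, both the primal and the dual intersection hypergraphs admit an $\epsilon$-$t$-net of size $O_{t}\bigl(\tfrac{1}{\epsilon}\log\log\tfrac{1}{\epsilon}\bigr)$. The starting point is the Aronov--Ezra--Sharir $O\bigl(\tfrac{1}{\epsilon}\log\log\tfrac{1}{\epsilon}\bigr)$ bound on $\epsilon$-nets for points and axis-parallel rectangles. To reduce intersection-of-rectangles to a setting where Aronov--Ezra--Sharir applies, I would use the dichotomy (valid in general position): two axis-parallel rectangles intersect iff one contains a corner of the other, or they form a ``$+$-cross'', one being narrower in $x$ and the other shorter in $y$, with strict interval containment in each coordinate. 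The corner subcase reduces to a point-in-rectangle $\epsilon$-net problem on the at most $4n$ corners. The $+$-cross subcase is governed by a $4$-dimensional dominance relation on the coordinates of the rectangles and reduces to a fixed-dimensional $\epsilon$-net problem of Aronov--Ezra--Sharir type. Finally, I would lift both bounds from $\epsilon$-nets to $\epsilon$-$t$-nets via the conversion of~\cite{AJKSY22} (used in the proof of Theorem~\ref{thm:eps-t-net}), which inflates the size by only a $\mathrm{poly}(t)$ factor while keeping the net elements pairwise disjoint.

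Second, I would feed this $\epsilon$-$t$-net bound into the recursive framework Theorem~\ref{thm:recursive}. In the pseudo-disc case, where the input net size is $O_{t}(1/\epsilon)$, the framework yields the linear bound $O(t^{6}n)$ of Theorem~\ref{thm:pd-intro}, and the $\mathrm{poly}(t)$ overhead accumulated by the recursion is the source of the $t^{6}$ factor. Carrying the extra $\log\log(1/\epsilon)$ factor in the net size through the same iterative peeling, which at each step removes the at most $s(\epsilon,t)(t-1)$ vertices identified as ``heavy'' by the $\epsilon$-$t$-net and recurses on the light subgraph, then yields precisely one extra factor of $\log n/\log\log n$ in the final Zarankiewicz bound, producing $O\bigl(t^{6}\, n\,\tfrac{\log n}{\log\log n}\bigr)$, as desired.

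The main obstacle is the first step. The $+$-cross subcase is not a point-in-region incidence problem, so obtaining an $\epsilon$-net of size $O\bigl(\tfrac{1}{\epsilon}\log\log\tfrac{1}{\epsilon}\bigr)$ for it requires a dedicated argument that exploits the $4$-dimensional dominance structure rather than directly invoking Aronov--Ezra--Sharir. A further subtlety is verifying the hypotheses of the $\epsilon\to\epsilon$-$t$-net conversion of~\cite{AJKSY22}: one must check that the intersection hypergraph of two families of axis-parallel rectangles satisfies the dual shatter-function bound required by Theorem~\ref{thm:eps-t-net}, and that the net elements can be chosen pairwise disjoint, so that the resulting $\epsilon$-$t$-net feeds cleanly into the recursive framework without losing factors beyond the target $\mathrm{poly}(t)$ overhead.
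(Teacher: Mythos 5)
There is a genuine gap, and it sits exactly where you flag the ``main obstacle'': the $\epsilon$-$t$-net you need in step one does not exist in the generality your recursion requires. If every induced intersection hypergraph of two families of axis-parallel rectangles admitted an $\epsilon$-$t$-net of size $O_t\bigl(\tfrac{1}{\epsilon}\log\log\tfrac{1}{\epsilon}\bigr)$ for all $\epsilon\geq c_t/n$, then plugging this into the recursive framework (choosing $\epsilon$ at each level so that the heavy sets shrink by a factor $2$, as in the pseudo-disc case) would give $O_t(n\log\log n)$ edges, not the $O_t\bigl(n\tfrac{\log n}{\log\log n}\bigr)$ you claim -- and $O_t(n\log\log n)$ contradicts the $\Omega\bigl(n\tfrac{\log n}{\log\log n}\bigr)$ lower bound of Basit et al.~\cite{BCS+21}, which already holds for points versus dyadic rectangles (a special case of this setting). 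So your two claims are inconsistent with each other: either the net bound is unattainable or the recursion arithmetic is wrong, and in fact both fail. There is also no ``conversion of~\cite{AJKSY22}'' that turns an $\epsilon$-net into an $\epsilon$-$t$-net at a $\mathrm{poly}(t)$ cost: Theorem~\ref{thm:eps-t-net} is not such a conversion and carries an intrinsic $\log(1/\epsilon)$ factor, while the $O_t(1/\epsilon)$ constructions in~\cite{AJKSY22} (and in Theorem~\ref{thm:pseudo_discs}) rely on hereditarily planar/linear Delaunay graphs -- a property that fails badly for rectangles (for points on a diagonal, every pair is a Delaunay edge, which is exactly why even plain $\epsilon$-nets for rectangles need size $\omega(1/\epsilon)$). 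The paper states explicitly that with the best known $\epsilon$-$t$-net for rectangles, of size $O\bigl(\tfrac{1}{\epsilon}\log\tfrac{1}{\epsilon}\log\log\tfrac{1}{\epsilon}\bigr)$, the net-based strategy gives no better than $O(n\log n\log\log n)$.

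The paper therefore proves Theorem~\ref{thm:rect-intro} by a different route, closer in spirit to the decomposition you sketch but avoiding nets altogether. Intersections are split into containment types (a corner of one rectangle lies in the other) and crossing types (a vertical edge crosses a horizontal edge). The containment types are handled by forming the bipartite incidence graph of the $4n$ corners of $A$ versus the rectangles of $B$, observing it is $K_{4t-3,4t-3}$-free, and invoking Chan--Har-Peled's Lemma~\ref{lem:CH}; this is the source of the $\tfrac{\log n}{\log\log n}$ factor. The crossing types are handled by a purely combinatorial double count: one bounds the number of \emph{canonical} $(2t-1)$-tuples of horizontal edges (those stabbed exactly by some vertical segment) by $O(t^5 n)$, via a planar drawing of the relevant Delaunay graph plus Theorem~\ref{thm:AKP}, and then $K_{t,t}$-freeness caps how many vertical edges can stab each canonical tuple, yielding an $O(t^6 n + tn)$ contribution. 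If you want to salvage your plan, you would need either a fundamentally new $\epsilon$-$t$-net bound for rectangles of size $O_t\bigl(\tfrac{1}{\epsilon}\cdot\tfrac{\log(1/\epsilon)}{\log\log(1/\epsilon)}\bigr)$ (matching the lower bound) together with a careful recursion analysis, or to switch to the paper's decomposition-plus-counting argument.
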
 
	As follows from a lower bound given in~\cite{BCS+21}, this result is sharp in terms of the dependence on $n$ even in the special case where one of the families consists of points and the other consists of dyadic axis-parallel rectangles.
	
	An interesting open question is whether the dependence on $t$ can be improved. It seems that the `right' dependence should be linear.
	
	\medskip
	
	In the case of bipartite intersections graphs of families of axis-parallel rectangles, and even in the more basic case of incidence graphs of points and axis-parallel rectangles, the currently known bounds on the size of $\epsilon$-$t$-nets do not allow obtaining efficient bounds for Zarankiewicz's problem using our $\epsilon$-$t$-net based strategy. Indeed, among these settings, `small'-sized $\epsilon$-$t$-nets for all $\epsilon \geq c/n$ are known to exist only for the incidence hypergraph of points and axis-parallel rectangles, and the size of the $\epsilon$-$t$-net is $O(\frac{1}{\epsilon}\log{\frac{1}{\epsilon}} \log \log{\frac{1}{\epsilon}})$ (see~\cite[Theorem~6.10]{AJKSY22}). Applying the strategy described above with an $\epsilon$-$t$-net of such size would lead to an upper bound on the number of edges in a $K_{t,t}$-free incidence graph of points and axis-parallel rectangles, that is no better than $O(n\log n \log \log n)$.
	
	In order to obtain the stronger (and tight) bound of $O(n\frac{\log n}{\log \log n})$ in the more general setting of bipartite intersection graphs of two families of axis-parallel rectangles, we combine the result of Chan and Har-Peled~\cite{CH23} with a combinatorial argument. Our technique allows us also to obtain an $O(t^6 n)$ upper bound on the number of edges in the bipartite intersection graph of two families of $n$ axis-parallel \emph{frames} (i.e., boundaries of rectangles) in the plane, and an improved bound of $O(t^4n)$ on the number of edges in the intersection graph of points and pseudo-discs (for which Chan and Har-Peled~\cite{CH23} obtained the bound $O(n\log \log n)$). 
	
	\subsection{Follow-up work}
	
	After the initial version of this paper was posted on arXiv, Hunter et al.~\cite{HMST24} observed that a weaker version of Theorem~\ref{thm:pd-intro} -- the upper bound for $K_{t,t}$-free bipartite intersection graphs of two families of pseudo-discs, can be derived from a recent work of Gir\~{a}o and Hunter~\cite{GH23} on \emph{degree-boundedness} of $K_{t,t}$-free graphs. The upper bound which can be derived using this method is $O(t^{12500}n)$, compared to $O(t^6 n)$ in Theorem~\ref{thm:pd-intro}. 
	
Specifically, let $G$ be the bipartite intersection graph of two families $A,B$ of $n$ pseudo-discs, and assume that $|E(G)|>O(t^{12500}n)$, and consequently, the average degree in $G$ is larger than $O(t^{12500})$. A special case of~\cite[Theorem~1.2]{GH23} states that if a $K_{t,t}$-free graph has average degree larger than $O(t^{12500})$, then the graph must contain an induced \emph{balanced} subdivision of $K_5$ -- namely, an induced subdivision of $K_5$ in which every edge of the $K_5$ is replaced by a path of the same length. Thus, $G$ contains such an induced subdivision. As $G$ is bipartite, the balancedness implies by a parity argument that all five vertices of the $K_5$ belong to the same family, say $A$. This implies that the Delaunay graph of the intersection hypergraph whose vertex set is $A$ and whose hyperedges are determined by the elements of $B$, contains a subdivision of $K_5$, and thus, is not planar. However, this contradicts a result of Keszegh~\cite[Theorem~16]{Kes17+}, which states that such a Delaunay graph must be planar for any two families $A,B$ of pseudo-discs.\footnote{We note that a similar argument implies that in the general (i.e., non-bipartite) case, a $K_{t,t}$-free string graph has $O_t(n)$ edges, since it does not contain an induced proper subdivision of $K_5$. This gives an alternative proof of the aforementioned bound of Fox and Pach~\cite{FP14}, alas with a worse dependence on $t$.} 
	
It is worth to mention that upper bounds on the average degree of $K_{t,t}$-free graphs which do not contain an induced 
\emph{proper} subdivision of $K_h$ were obtained in several previous works, starting with the fundamental work of K\"{u}hn and Osthus~\cite{KuhnO04}. However, as was described above, the application of such a result in our case requires a stronger assertion in which proper subdivisions are restricted to  \emph{balanced} subdivisions. Hence, the stronger recent result of Gir\~{a}o and Hunter~\cite{GH23} is needed in order to use this strategy here.

	\subsection{Organization of the paper} 
	
	In Section~\ref{sec:main} we present our new approach to Zarankiewicz's problem via $\epsilon$-$t$-nets and prove Theorems~\ref{thm:VCour} and~\ref{thm:pd-intro}. In Section~\ref{sec:rectangles} we obtain a sharp bound on the number of edges in a $K_{t,t}$-free bipartite intersection graph of two families of axis-parallel rectangles, and use a similar strategy to obtain an improved bound on the number of edges in a $K_{t,t}$-free incidence graph of points and pseudo-discs. Finally, in Appendix~\ref{App:Fox} we compare our Theorem~\ref{thm:VCour} with Theorem~\ref{thm:main-boundedvc} of Fox et al.~\cite{FPSSZ17}.
	
	\section{From $\epsilon$-$t$-Nets to $K_{t,t}$-free Bipartite Graphs}
	\label{sec:main}
	
	\subsection{A recursive upper bound on the size of $K_{t,t}$-free bipartite graphs}
	
	Let $G_{A,B}$ be a bipartite graph, where $|A|=m,|B|=n$. As was shown in the introduction, if for some $\epsilon>0$, the primal hypergraph $H_G=(A,\{N(b)  \}_{b \in B})$ admits an $\epsilon$-$t$-net of size $s$, we may partition the vertex set $B$ into the set $B'$ of `heavy' vertices that have degree at least $\epsilon m$ and the set $B''$ of `light' vertices that have degree less than $\epsilon m$, and observe that since $G_{A,B}$ is $K_{t,t}$-free, $|B'|\leq s(t-1)$. This yields the bound   
	\[
	|E(G)| \leq n \lfloor\epsilon m \rfloor + s(t-1)m, 
	\]
	where the first term is the contribution of the `light' vertices and the second term is the contribution of the `heavy' vertices.
	
	If, in addition, the dual hypergraph $H_G^*=H( B, \{  N(a)\}_{a \in A})$ admits an $\epsilon'$-$t$-net of size $s'$, then we can repeat the procedure described above and partition the vertex set $A$ into the set $A'$ of `heavy' vertices that have degree at least $\epsilon' n$ and the set $A''$ of `light' vertices that have degree less than $\epsilon' n$. By the same argument as above, $|A'|\leq s'(t-1)$. Hence, we can obtain the bound  
	\[
	|E(G)| \leq n \lfloor\epsilon m \rfloor + m \lfloor \epsilon' n \rfloor + ss'(t-1)^2, 
	\]
	where the first term bounds the contribution of the `light' vertices from $B$, the second term bounds the contribution of the `light' vertices from $A$, and the third term bounds the contribution of the `heavy'-`heavy' edges.
	
	Furthermore, we can reduce the third term by observing that the set of `heavy'-`heavy' edges is the edge set of the bipartite graph $G_{A',B'}$ (i.e., the induced subgraph of $G$ on the vertex set $A' \cup B'$), which is $K_{t,t}$-free, and thus, we may be able to apply to it the above partioning once again, provided that the corresponding primal and dual hypergraphs admit `small'-sized $\epsilon$-$t$-nets. This gives rise to the recursive Algorithm~\ref{alg} depicted below.
	
	%In the following Section we levearge the relation obtained in Section \ref{subsec:relation} between $\epsilon$-$t$-nets to $K_{t,t}$-free bipartite fraphs, to obtain a recursive algorithm that `translates' $\epsilon$-$t$-net theorems, to upper bounds on $K_{t,t}$-free bipartite graphs. Then in Section \ref{subsec:pseudo-discs} we demonstrate the algorithm and analyze it in the case of an intersection graph of pseudo-discs.
	
	%\subsection{The Algorithm} \label{subsec:alg}
	%The relation established in Section \ref{subsec:relation} between $\epsilon$-$t$-nets and $K_{t,t}$-free bipartite fraphs, can be leveraged by the following algorithm. The input is a bipartite graph $G_{A,B}$ with $|V(A)|=m$ and $|V(B)|=n$, and the output is an upper bound on $|E(G_{A,B})|$. 
	
	%Some flexible part of the algorithm is the choice of $\epsilon_1, \epsilon_2$ at Step 1, that depends on the exact values of the corresponding $\epsilon$-$t$-net theorem. In Section \ref{subsec:pseudo-discs} we simply choose both of them to be the smallest possible, such that the required $\epsilon$-$t$-net theorem still holds.
	
	%Recall that $G_{A,B}$ defines two hypergraphs:
	%$$H_1=(  A, \{  e_b=\{a \in A : (a,b) \in E(G_{A,B})  \}_{b \in B}   )$$
	%$$H_2=(  B, \{  e_a=\{b \in B : (a,b) \in E(G_{A,B})  \}_{a \in A}   ).$$
	%Both these hypergraphs will be used in the algorithm. Furthermore, for each $A' \subset A, B' \subset B$ denote by $G_{A',B'}$ the subgraph of $G_{A,B}$ induced by $A' \cup B'$.
	
	\begin{algorithm}[H]
		\caption{NumEdges}
		\hspace*{\algorithmicindent} \textbf{Input:} 
		%$m,n,t \in N,G_{A,B}$, with $|A| \leq m, |B| \leq n$\\ 
		$G_{A,B}$, $t$ \\
		\hspace*{\algorithmicindent} \textbf{Output:} Upper bound on $|E(G_{A,B})|$, assuming $G_{A,B}$ is $K_{t,t}$-free
		\label{alg}
		\begin{algorithmic}[1] 
			\State Choose $\epsilon,\epsilon'$
			\State Define $s$ to be the minimum size of an $\epsilon$-$t$-net for $H_G$
			\State Define $s'$ to be the minimum size of an $\epsilon'$-$t$-net for $H^*_G$
			\State Let $A'= \{v \in A:\mathrm{deg}_{G_{A,B}}(v) \geq \epsilon' n\}$  
			\State Let $B'= \{w \in B:\mathrm{deg}_{G_{A,B}}(w) \geq \epsilon m\}$
			\State Return 	$n \lfloor\epsilon m \rfloor + m \lfloor \epsilon' n \rfloor$ +NumEdges$(G_{A',B'})$  		
		\end{algorithmic}
	\end{algorithm}
	
	Note that the choice of $\epsilon, \epsilon'$ at Step 1 of the algorithm is not specified. The optimal choice is determined by the dependence of the size of the smallest $\epsilon$-$t$-net of $H_G$ and of $H^*_G$ on $\epsilon$. In the applications presented below, we simply choose both $\epsilon$ and $\epsilon'$ to be the smallest possible value for which the existence of a `small'-sized $\epsilon$-$t$-net for the corresponding hypergraph is known. 
	
	\medskip \noindent \textbf{Correctness of Algorithm \ref{alg}.}
	
	Let us call a vertex of $A' \cup B'$ \emph{heavy}, and call the other vertices \emph{light}. Algorithm \ref{alg} counts separately the edges of $G_{A,B}$ that involve a light vertex, and the edges that connect two heavy vertices. All the latter edges are counted by the recursion at Step 6. 
	%Note that $|A'| \leq k_2(t-1)$ since any $t$-tuple in the $\epsilon_2$-$t$-net for $H_2$ participates in at most $t-1$ hyperedges of $H_2$, due to being $G_{A,B}$ a $K_{t,t}$-free graph. Similarly, $|B'| \leq k_1(t-1)$.
	
	Regarding the edges of $G_{A,B}$ that involve a light vertex, there are at most $m$ light vertices in $A$, and each of them is involved in at most $\lfloor \epsilon' n \rfloor$ edges of $G_{A,B}$. Similarly, there are at most $n$ light vertices in $B$, and each of them is involved in at most $\lfloor \epsilon m \rfloor$ edges of $G_{A,B}$. This explains the additive term $n \lfloor\epsilon m \rfloor + m \lfloor \epsilon' n \rfloor$ at Step 6. 
	
	\medskip \noindent \textbf{Upper bound for Zarankiewicz's problem for hereditary classes of objects.}
	
	Algorithm~\ref{alg} allows establishing a recursive formula that yields an upper bound for Zarankiewicz's problem for a wide class of graphs. To present the formula in its full generality, a few more definitions and notations are needed.
	
	For a bipartite graph $G=G_{A,B}$ where $|A|=m, |B|=n$, we denote by $f_G(m,k)$ the minimum size of a $\frac{k}{m}$-$t$-net of the primal hypergraph $H$ that corresponds to $G$, and by $f^*_G(n,\ell)$ the minimum size of an $\frac{\ell}{n}$-$t$-net of the dual hypergraph $H^*$ that corresponds to $G$.
	
	We say that a class $\mathcal{F}$ of objects is \emph{hereditary} if it is downwards closed, meaning that $(A \in \mathcal{F}) \wedge (A' \subset A) \Rightarrow (A' \in \mathcal{F})$. For example, the class of all families of pseudo-discs in the plane is clearly hereditary. 
	
	For two fixed hereditary classes of objects $\mathcal{F},\mathcal{F}'$, we denote by $f(m,k)=f_{\mathcal{F},\mathcal{F}'}(m,k)$ and $f^*(n,\ell)=f^*_{\mathcal{F},\mathcal{F}'}(n,\ell)$ the maximum of $f_G(m,k)$ and of $f^*_G(n,\ell)$ (respectively) over all bipartite graphs $G=G_{A,B}$ such that $A \in \mathcal{F}$, $B \in \mathcal{F}'$, $|A|=m$, and $|B|=n$.\footnote{We note that an extra condition that $G_{A,B}$ is $K_{t,t}$-free could be added here, since all bipartite graphs encountered during the recursive process are $K_{t,t}$-free. It will be interesting to understand whether this additional assumption implies the existence of $\epsilon$-$t$-nets of a smaller size.} Furthermore, we denote by $g(m,n)=g_{\mathcal{F},\mathcal{F}'}(m,n)$ the maximum number of edges in a $K_{t,t}$-free bipartite graph $G_{A,B}$, where $A \in \mathcal{F}$, $B \in \mathcal{F}'$, $|A|=m$, and $|B|=n$. 
	\begin{theorem}\label{thm:recursive}
		Let $\mathcal{F},\mathcal{F}'$ be hereditary classes of objects. In the above notations, we have
		\begin{equation}
			g(m,n) \leq \min_{1 \leq k \leq m-1} \min_{1 \leq \ell \leq n-1} ((k-1)n+(\ell-1) m + g((t-1)f(m,k),(t-1)f^*(n,\ell))). 
		\end{equation} 
	\end{theorem}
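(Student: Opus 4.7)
The plan is to formalize the two-sided ``heavy-light'' partition sketched just before Algorithm~\ref{alg} and to carry out a single step of the recursion, so that the $K_{t,t}$-free subgraph induced on the heavy vertices can be fed back into $g$ via the hereditary hypothesis.

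Fix arbitrary $k,\ell$ with $1\le k\le m-1$, $1\le\ell\le n-1$, set $\epsilon=k/m$ and $\epsilon'=\ell/n$, and let $G=G_{A,B}$ be any $K_{t,t}$-free bipartite graph with $A\in\mathcal{F}$, $B\in\mathcal{F}'$, $|A|=m$, $|B|=n$. I would split $B$ into the heavy vertices $B'=\{b:\deg_G(b)\ge\epsilon m=k\}$ and the light vertices $B''$, and split $A$ analogously into $A'\cup A''$ using the threshold $\epsilon'n=\ell$. Every edge of $G$ either has at least one light endpoint or lies inside the induced subgraph $G_{A',B'}$; the edges of the first kind number at most $(k-1)n+(\ell-1)m$, since each $b\in B''$ satisfies $\deg_G(b)\le k-1$ and each $a\in A''$ satisfies $\deg_G(a)\le\ell-1$.

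Next I would bound the heavy sides via $\epsilon$-$t$-nets. Let $N$ be a minimum $\epsilon$-$t$-net for the primal hypergraph $H_G=(A,\{N_G(b):b\in B\})$, so $|N|\le f_G(m,k)\le f(m,k)$. Each heavy $b\in B'$ has $|N_G(b)|\ge\epsilon m$, hence $N_G(b)$ must contain some $t$-tuple $\tau\in N$. The $K_{t,t}$-freeness of $G$ forces any fixed $\tau$ to be contained in at most $t-1$ neighborhoods $N_G(b)$: otherwise $t$ such $b$'s together with the $t$ elements of $\tau$ would span a $K_{t,t}$. Assigning each $b\in B'$ to an arbitrary such $\tau(b)$ and applying pigeonhole gives $|B'|\le(t-1)|N|\le(t-1)f(m,k)$, and the identical argument applied to the dual hypergraph $H_G^*$ with an $\epsilon'$-$t$-net yields $|A'|\le(t-1)f^*(n,\ell)$.

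Finally, $G_{A',B'}$ is itself $K_{t,t}$-free, and the hereditary property forces $A'\in\mathcal{F}$ and $B'\in\mathcal{F}'$, so by the definition of $g$ (and its monotonicity in each argument)
\[
|E(G_{A',B'})|\le g(|A'|,|B'|)\le g\bigl((t-1)f(m,k),(t-1)f^*(n,\ell)\bigr).
\]
Adding the $(k-1)n+(\ell-1)m$ contribution from the light edges and minimizing over all admissible $k,\ell$ yields the claimed inequality. The only non-routine step is the $\epsilon$-$t$-net plus $K_{t,t}$-free pigeonhole that bounds $|B'|$ and $|A'|$; the remainder is bookkeeping together with the (immediate) use of the hereditary hypothesis to ensure that the recursive invocation of $g$ on $(A',B')$ is legitimate.
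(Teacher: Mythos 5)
Your proposal is correct and follows essentially the same route as the paper's proof: a heavy/light split with thresholds $k=\epsilon m$ and $\ell=\epsilon' n$, the bound $(k-1)n+(\ell-1)m$ on edges meeting a light vertex, the $\epsilon$-$t$-net plus $K_{t,t}$-free pigeonhole argument giving $|B'|\le(t-1)f(m,k)$ and $|A'|\le(t-1)f^*(n,\ell)$, and the hereditary hypothesis to bound $|E(G_{A',B'})|$ by $g$ before minimizing over $k,\ell$. The only cosmetic difference is that you make explicit the (harmless, and also implicitly used in the paper) monotonicity step passing from $g(|A'|,|B'|)$ to $g$ at the stated arguments.
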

	
	\begin{proof}
		For any bipartite graph $G_{A,B}$, where $A \in \mathcal{F}$, $B \in \mathcal{F}'$, $|A|=m$, and $|B|=n$, and any $k,\ell$, we may apply Algorithm~\ref{alg} with $\epsilon=\frac{k}{m}$ and $\epsilon'=\frac{\ell}{n}$, to obtain the bound 
		\[
		|E(G_{A,B})| \leq (k-1)n+(\ell-1)m + |E(G_{A',B'})|.
		\]
		Here, the term $(k-1)n$ bounds the contribution of the `light' vertices in $B$, as there are at most $n$ such vertices and each of them has degree strictly less than $\frac{k}{m}m=k$. The term $(\ell-1)m$ bounds the contribution of the `light' vertices in $A$ in a similar way.
		
		Note that we have $|B'| \leq (t-1)f(m,k)$. Indeed, let $S$ be an $\epsilon$-$t$-net for $H_G$ of size $f(m,k)$. On the one hand, for each $b \in B'$, the hyperedge $e_b$ contains a $t$-tuple from $S$ (since $|e_b|\geq k$ and $S$ is a $\frac{k}{m}$-$t$-net). On the other hand, as $G_{A,B}$ is $K_{t,t}$-free, any $t$-tuple in $S$ participates in at most $t-1$ hyperedges of $H_G$. Thus, $|B'| \leq (t-1)|S|=(t-1)f(m,k)$. By the same argument, we have 
		$|A'| \leq (t-1)f^*(n,\ell)$. Since $\mathcal{F},\mathcal{F}'$ are hereditary, it follows that 
		$
		|E(G_{A',B'})| \leq g((t-1)f(m,k),(t-1)f^*(n,\ell)),
		$
		and thus, 
		\[
		|E(G_{A,B})| \leq (k-1)n+(\ell-1)m + g((t-1)f(m,k),(t-1)f^*(n,\ell)).
		\]
		
		Taking the minimum over all $1 \leq k \leq m-1$ and $1 \leq \ell \leq n-1$ and then the maximum over all $G_{A,B}$, where $A \in \mathcal{F}$, $B \in \mathcal{F}'$, $|A|=m$, and $|B|=n$, completes the proof.
	\end{proof}
	Theorem~\ref{thm:recursive} allows applying results on $\epsilon$-$t$-nets to obtain upper bounds for Zarankiewicz's problem in a black box manner. The results presented in the following subsections are obtained by applying this approach (or parts of it) for specific classes of graphs.
	
	\subsection{Graphs with bounded VC-dimension}\label{subsec:relation}
	
	%In this Section we establish the relation between $\epsilon$-$t$-nets and  %$K_{t,t}$-free bipartite fraphs, and use this relation to 
	%provide a simple short proof of Theorem~\ref{thm:VCour}.
	
	The first step of the approach presented above along with Theorem~\ref{thm:eps-t-net} yield a strikingly simple proof of Theorem~\ref{thm:VCour}.
	
	\begin{proof}[Proof of Theorem~\ref{thm:VCour}]
		Put $\epsilon = \frac{C_1^{1/{d^*}}(t-1)}{m^{1/{d^*}}}$, where $C_1$ is the constant from Theorem \ref{thm:eps-t-net}. Let $N$ be an $\epsilon$-$t$-net for $H_G$ of size $O((d(1+\log t)/\epsilon)\log(1/\epsilon))$, whose existence follows from Theorem~\ref{thm:eps-t-net}.
		
		Let $B'\subset B$ be the set of vertices with degree at least $\epsilon m = \Theta_{d^*,t} (m^{1-\frac{1}{d^*}})$ in $G$. We claim that 
		\[
		|B'| \leq (t-1)|N| =O_{d,t}( \frac{1}{\epsilon}\log \frac{1}{\epsilon})=O_{d,d^*,t}(m^{\frac{1}{d^*} }\log m).
		\]
		%We will show that the number of vertices in $B'$ is very small, i.e.,   Indeed, %assume to the contrary that $| B'| \geq (t-1)|N|+1$. Note that for each vertex $b %\in  B'$, $N[b]$ is a hyperedge in $H_G$  which contains more than $\epsilon m$ %vertices so by Theorem~\ref{thm:eps-t-net} it must contain one of the $t$-tuples %of $N$. However by the pigeon-hole pinciple there must be a $t$-tuple  $T \in N$ %that participates in $t$ or more hyperedges of $H_G$, but this implies the %existence of $K_{t,t}$ in $G_{A,B}$, a contradiction. 
		Indeed, on the one hand, for each $b \in B'$, the hyperedge $e_b$ contains a $t$-tuple from $N$. On the other hand, as $G_{A,B}$ is $K_{t,t}$-free, any $t$-tuple in $N$ participates in at most $t-1$ hyperedges of $H_G$. Thus, $|B'| \leq (t-1)|N|$, as asserted.
		
		To complete the proof, we note that $|E(G)| = (\sum_{b\in B} d(b))$, where $d(b)$ is the degree of $b$ in $G$. Hence, we have
		$$
		|E(G)|=\sum_{b \in B} d(b) = \sum_{b \in B'}d(b) + \sum_{b \in B \setminus B'} d(b) \leq |B'|m+ |B \setminus B'|\epsilon m = O_{d^*,d,t}(m^{1+1/d^*}\log m + nm^{1-1/d^*}). 
		$$ The $min$ 
		%part of the 
		assertion is achieved by applying the same argument to $H_G^*$ instead of $H_G$.
	\end{proof}
	
	A discussion on the relation between our Theorem~\ref{thm:VCour} and Theorem~\ref{thm:main-boundedvc} of Fox et al.~\cite{FPSSZ17} is presented in Appendix~\ref{App:Fox}.
	
	\subsection{$K_{t,t}$-free bipartite intersection graphs of pseudo-discs}
	\label{subsec:pseudo-discs}
	
	%The power of Algorithm \ref{alg} can be optimally exploited where the existence of the $\epsilon$-$t$-net holds already when the number of vertices is `relatively small' with respect to $\frac{1}{\epsilon}$ (e.g., when the number of vertices is about $\Theta(\frac{1}{\epsilon})$). This is the case, for example, in the intersection graph of two families of pseudo-discs. 
	
	Our recursive strategy can be exploited optimally when both the primal and the dual hypergraphs that correspond to $G$ admit $O_t(1/\epsilon)$-sized $\epsilon$-$t$-nets for $\epsilon$ as small as $O(1/|V|)$, where $|V|$ is the number of vertices of the corresponding hypergraph. In this subsection we prove that this is the case for intersection graphs of two families of pseudo-discs.\footnote{The special case $t=2$ of this result appeared in~\cite{AJKSY22}, but the proof method there is very specific for $t=2$.} Then, we use this to obtain an improved linear-sized bound on the number of edges of the graph.
	
	%First, we prove the required $\epsilon$-$t$-net result for the corresponding hypergraph\footnote{The special case $t=2$ of this result appeared in~\cite{AJKSY22}, but the proof method there is specific for $t=2$, and thus, the current proof uses a different argument.}, and then we apply Algorithm~\ref{alg} and analyze the obtained bound. 
	We begin with a formal definition.
	\begin{definition}
		A family $\F$ of simple closed Jordan regions in $\Re^2$ is called a \emph{family of pseudo-discs} if for any $a,b \in \F$, the boundaries of $a$ and $b$ intersect at most twice.
	\end{definition} 
	We prove the following $\epsilon$-$t$-net theorem for a hypergraph induced by two families of pseudo-discs, which might be of independent interest. The theorem is `optimal', in the sense that it provides an $O_t(1/\epsilon)$-sized $\epsilon$-$t$-net already when $\epsilon n$ is constant.
	
	\begin{theorem} \label{thm:pseudo_discs}
		Let $\F_1$ and $\F_2$ be two families of pseudo-discs. Let $H$ be a hypergraph whose vertex-set is $\F_1$, where each $b \in \F_2$ defines a hyperedge $e_b=\{ a \in \F_1 : a \cap b \neq \emptyset  \}$. If $|\F_1|=n$ and $\epsilon n \geq 2t$ then $H$ admits an $\epsilon$-$t$-net of size $O(t^5 \cdot \frac{1}{\epsilon})$.
	\end{theorem}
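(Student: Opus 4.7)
The plan is to generalize the $\epsilon$-$2$-net construction of Alon et al.~\cite{AJKSY22} for intersection hypergraphs of two families of pseudo-discs to arbitrary $t$, using two structural ingredients: (i) the linear-sized $\epsilon$-net theorem for such hypergraphs, and (ii) the planarity of the Delaunay graph of two families of pseudo-discs, due to Keszegh~\cite[Theorem~16]{Kes17+}.

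First I would set $\epsilon' = \Theta(\epsilon/t)$ and build a ``$t$-fold hitting set'' $S \subseteq \F_1$ of size $O(t^2/\epsilon)$, namely a set meeting every $\epsilon$-heavy hyperedge $e_b$ in at least $t$ vertices. Such an $S$ can be obtained by taking the union of $t$ pairwise disjoint $\epsilon'$-nets of $H$, each of size $O(1/\epsilon') = O(t/\epsilon)$, where the existence of linear-sized $\epsilon$-nets for pseudo-disc intersection hypergraphs is the key tool; the assumption $\epsilon n \geq 2t$ ensures that pairwise disjoint nets of the desired size fit inside $\F_1$. Then, for each heavy $b \in \F_2$, I would choose a canonical $t$-subset $T_b \subseteq e_b \cap S$ according to a Delaunay-type rule, and set $N = \{T_b : b \text{ heavy}\}$. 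By construction $N$ is an $\epsilon$-$t$-net; the remaining task is to bound $|N|$ by $O(t^5/\epsilon)$.

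The bound on $|N|$ would proceed by a peeling argument on the Delaunay graph of the sub-hypergraph induced by $S$. By Keszegh's theorem this Delaunay graph is planar, hence $5$-degenerate, so one can iteratively remove a vertex $v \in S$ of Delaunay-degree at most $5$, and charge to $v$ the canonical $t$-tuples in $N$ for which $v$ plays a distinguished role (e.g., smallest coordinate in some fixed ordering). An induction on $t$ would then bound the number of such $t$-tuples per peeled vertex by $O(t^3)$: after fixing $v$, one restricts attention to pseudo-discs in $\F_2$ containing $v$, which defines a sub-hypergraph on $S \setminus \{v\}$ that is again planar by Keszegh's theorem applied to the induced sub-families (pseudo-disc-ness being hereditary), so that the inductive hypothesis for $(t-1)$-tuples applies. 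Combining $|S|=O(t^2/\epsilon)$ with $O(t^3)$ tuples per peeling step yields $|N|=O(t^5/\epsilon)$.

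The main obstacle will be making the canonical $t$-tuple selection rule compatible with the peeling induction. A priori, each heavy hyperedge contains up to $\binom{|e_b \cap S|}{t}$ candidate $t$-subsets, so the rule must be local enough that only $O(t^3)$ tuples are charged to each peeled vertex, yet global enough that every heavy $e_b$ is actually covered by some $T_b \in N$. Making this balance work --- in particular, ensuring that the restricted sub-hypergraphs arising at each level of the induction still admit the planar Delaunay structure needed for the induction to close --- is where the bulk of the combinatorial work will lie, and is likely where the exponent $5$ in $t^5$ precisely emerges.
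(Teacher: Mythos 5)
Your first step --- building a set $S$ that meets every $\epsilon$-heavy hyperedge in at least $t$ vertices by iterating linear-size $\epsilon$-nets --- is essentially the paper's opening move, though your parameters are lossier: the paper takes $t$ successive $\frac{\epsilon}{2}$-nets of the residual induced hypergraphs (this is where $\epsilon n \geq 2t$ enters: an edge hit fewer than $t$ times has lost at most $t-1$ vertices, so its trace still has size at least $\epsilon n - t \geq \frac{\epsilon n}{2}$), yielding $|S| = O(t/\epsilon)$ rather than your $O(t^2/\epsilon)$. Note also that ``$t$ pairwise disjoint $\epsilon'$-nets of $H$'' is not free; disjointness forces you into the residual-hypergraph construction anyway, and then one must check that heavy edges remain heavy enough in the residuals, which is exactly the calculation above.

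The genuine gap is in the counting step, which you yourself flag as unresolved, and the specific mechanism you propose does not work as stated. First, peeling a vertex $v$ of Delaunay-degree at most $5$ gives no control over the number of size-$t$ traces containing $v$: the Delaunay graph records only traces of size exactly two, and a vertex of Delaunay degree $0$ may lie in $\Omega(|S|)$ traces of size $t$. Second, the induction you sketch --- restrict to the pseudo-discs of $\F_2$ containing $v$ and apply the $(t-1)$-case on $S\setminus\{v\}$ --- returns, from any hypothesis of the form ``at most $O(t^{c})\cdot m$ tuples on $m$ vertices,'' a bound proportional to $|S|$, not the $O(t^3)$ per peeled vertex that your arithmetic requires, so the induction does not close. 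The paper fills precisely this hole with Theorem~\ref{thm:AKP} of~\cite{AKP21}: since the Delaunay graph of every induced subhypergraph of $H_S$ is planar ($C=3$) and the VC-dimension is at most $4$ (both via Keszegh~\cite{Kes17+}), every induced subhypergraph on $m$ vertices has $O(t^3 m)$ hyperedges of size at most $t$; a double-counting (averaging) step then produces a vertex lying in only $O(t^4)$ such hyperedges, and it is \emph{that} vertex which is peeled, with \emph{all} size-exactly-$t$ traces through it added to $N$. This also dissolves your selection-rule difficulty: instead of committing to one canonical $T_b$ per heavy $b$ and then struggling to bound the number of distinct tuples, the paper proves coverage by noting that when $|e\cap S_{\mathrm{current}}|$ first drops from $t$ to $t-1$ during the peeling, the trace at that moment is exactly one of the recorded $t$-tuples. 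Finally, even granting a per-vertex charge of $O(t^4)$, your $|S|=O(t^2/\epsilon)$ would only give $O(t^6/\epsilon)$; the claimed $t^5$ needs both the tighter $|S|=O(t/\epsilon)$ and the $O(t^4)$ per-vertex count, neither of which your sketch establishes.
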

	
	The proof of Theorem \ref{thm:pseudo_discs} makes use of the following definition and result:
	
	\begin{definition}
		Let $H=(V,\E)$ be a hypergraph. The \emph{Delaunay graph} of $H$ is the graph on the same vertex-set, whose edges are the hyperedges of $H$ of cardinality 2.
	\end{definition}
	
	\begin{theorem}[\cite{AKP21}, Theorem 6(ii,iii)] \label{thm:AKP}
		Let $H=(V,\E)$ be a hypergraph. Suppose there exists $C>0$ such that for every $V' \subset V$, the Delaunay graph of the hypergraph induced by $V'$ has less than $C|V'|$ hyperedges.\footnote{The hypergraph induced by $V'$ is $(V',\E')$, where $\E'=\{e \cap V':e \in \E\}$.} Denote the VC-dimension of $H$ by $d$. Then:
		\begin{enumerate}
			\item $d \leq 2C$.
			
			\item $H$ has $O(t^{d-1}|V|)$ hyperedges of size at most $t$.  
		\end{enumerate}  
	\end{theorem}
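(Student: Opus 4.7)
The two parts require rather different levels of effort, so I would handle them in sequence.

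For part (1), the plan is a short shattering argument. Let $d$ denote the VC-dimension of $H$, and fix a shattered set $V^{\star} \subset V$ with $|V^{\star}| = d$: every subset of $V^{\star}$ arises as $e \cap V^{\star}$ for some $e \in \E$. In particular, every one of the $\binom{d}{2}$ two-element subsets of $V^{\star}$ is a size-$2$ hyperedge of the hypergraph induced on $V^{\star}$, so its Delaunay graph has exactly $\binom{d}{2}$ edges. Applying the hypothesis with $V' = V^{\star}$ yields $\binom{d}{2} < C \cdot d$, which rearranges to $d \leq 2C$.

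For part (2), the plan is a Clarkson--Shor-flavoured random-sampling argument, aimed at the refined estimate $N_k := |\{e \in \E : |e| = k\}| = O(k^{d-2}|V|)$ for every $2 \leq k \leq t$; summing over $k$ (and absorbing the trivial $O(|V|)$ contribution from $k \leq 1$) delivers the advertised $\sum_{k \leq t} N_k = O(t^{d-1}|V|)$. For a fixed $k$, I would sample $R \subset V$ by including each vertex independently with probability $p = 2/k$. For every hyperedge $e \in \E$ with $|e| = k$, the event $|e \cap R| = 2$ occurs with probability $\binom{k}{2} p^{2}(1-p)^{k-2} = \Theta(1)$, and whenever it happens $e \cap R$ is a size-$2$ hyperedge of the sub-hypergraph induced on $R$ --- and hence an edge of its Delaunay graph, whose total count is bounded by $C|R|$ via the hypothesis, with $\EE[|R|] = 2|V|/k$.

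The main obstacle, as always in this style of argument, is the \emph{multiplicity} issue: several distinct size-$k$ hyperedges $e, e' \in \E$ may collapse to the same $2$-element projection $e \cap R = e' \cap R$, so one cannot immediately conclude $N_k = O(|V|/k)$ from the above. To pin down the precise $k^{d-2}$ factor, I plan to invoke part (1) (so $d \leq 2C$) and apply the Perles--Sauer--Shelah lemma to the ``link'' system $L_{u,v} = \{e \setminus \{u,v\} : \{u,v\} \subset e \in \E\}$ around each fixed pair $\{u,v\}$: this link inherits VC-dimension at most $d$, which should bound the per-pair multiplicity inside $V \setminus R$ by a polynomial in $k$ after a standard Clarkson--Shor averaging over $R$. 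Optimising the sampling probability and combining the multiplicity bound with the linear Delaunay budget is where the delicate bookkeeping sits, and where I expect essentially all the work of the proof to lie; once carried out it should give $N_k = O(k^{d-2}|V|)$ and hence the conclusion.
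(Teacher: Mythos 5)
This theorem is imported: the paper quotes it as \cite{AKP21}, Theorem~6(ii,iii) and contains no proof of it, so there is no in-paper argument to compare yours against; I can only assess the proposal on its own terms. Your part~(1) is correct and is the standard argument: on a shattered set $V^{\star}$ of size $d$ every pair is a trace, so the Delaunay graph of the hypergraph induced on $V^{\star}$ is complete, and the hypothesis gives $\binom{d}{2}<Cd$, i.e.\ $d<2C+1$ (which yields the stated $d\le 2C$ when $2C$ is an integer; otherwise there is only a harmless rounding discrepancy).

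Part~(2), however, is where the theorem actually lives, and your plan has a genuine gap sitting exactly where you file it under ``bookkeeping''. The Delaunay graph of the hypergraph induced on $R$ records \emph{distinct traces} $e\cap R$, not hyperedges with multiplicity, so the expectation bound $C\cdot\Ex{|R|}$ controls only the number of distinct pairs realized as traces. If the collapse of many size-$k$ hyperedges onto one trace were controllable by an $O(1)$ factor, your computation would give $N_k=O(|V|/k)$ --- and that conclusion is simply false: for points in convex position and halfplanes the hereditary Delaunay hypothesis holds with a fixed constant $C$, yet there are $\Theta(|V|)$ hyperedges of each size $k$. So the multiplicity is not a technical nuisance; it is precisely where the entire $k^{d-2}$ (hence $t^{d-1}$) factor has to come from, i.e.\ it is the content of the theorem. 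Moreover, the tool you propose for it does not deliver: the Perles--Sauer--Shelah lemma applied to the link $L_{u,v}$ on the ground set $V\setminus R$ bounds the number of realizable link-traces by a polynomial in $|V\setminus R|\approx |V|$, not in $k$; to bound the number of link-sets of size $k-2$ by a polynomial in $k$ you would need exactly a ``few small hyperedges'' statement of the same kind as part~(2) itself, so as sketched the argument is circular. Note also that the classical Clarkson--Shor treatment of at-most-$k$ sets evades this by using bounded description complexity of the ranges (each hyperedge determined by $O(1)$ defining elements), which is unavailable in this abstract hypergraph setting. A correct proof must feed the hereditary Delaunay hypothesis (and not merely the VC-dimension) back into the multiplicity count --- for instance through an induction on $t$ or a packing-type argument performed on the induced subhypergraphs --- and none of that is present in the outline.
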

	
	\begin{proof}[Proof of Theorem \ref{thm:pseudo_discs}]
		First, we find a `small' set $S \subset V(H)=\F_1$ such that each hyperedge in $H$ of size at least $\epsilon n$ contains at least $t$ elements of $S$. To this end, we first let $K_1 \subset V(H)$ be an $\epsilon$-net for $H$ of size $O(\frac{1}{\epsilon})$. The existence of such a linear-sized $\epsilon$-net is well-known (even for the more general case of two families of non-piercing regions), see, e.g.,~\cite[Thm.~6.2]{AJKSY22}. 
		Then, for each $2 \leq i \leq t$ sequentially, we consider the hypergraph induced on $V(H) \setminus(K_1 \cup \ldots \cup K_{i-1})$, and let $K_i$ be an $O(\frac{1}{\epsilon})$-sized $\frac{\epsilon}{2}$-net for it. Let $S=\bigcup_{i=1}^t K_i$. Clearly, $|S|=O(\frac{t}{\epsilon})$.
		
		We claim that each hyperedge $e \in \E(H)$ with $|e|\geq \epsilon n$ contains at least $t$ elements of $S$. Indeed, if $|S \cap e| <t$, then there exists some $1 \leq i  \leq t$ such that before the $i'$th step, the hyperedge induced by $e$ contained less than $t$ elements of $K_1 \cup \ldots \cup K_{i-1}  $, and $e \cap K_i = \emptyset$. Since $\epsilon n \geq 2t$, we have $\epsilon n -t \geq \frac{\epsilon n}{2}$, a contradiction to $K_i$ being an $\frac{\epsilon}{2}$-net.
		
		Having the set $S \subset V(H)$ in hands, we construct an $\epsilon$-$t$-net for $H$ that consists of $t$-tuples in $\binom{S}{t}$. Let $H_S$ be the hypergraph induced by $H$ on $S$. It is known that the Delaunay graph of $H$, and therefore, of $H_S$, is planar -- namely, the condition of Theorem \ref{thm:AKP} holds with $C=3$, and the VC-dimension of $H_S$ is at most $4$ (see \cite{Kes17+}). These two properties hold for any induced subhypergraph of $H_S$ as well. By Theorem \ref{thm:AKP}, this implies that any induced subhypergraph of $H_S$ on $m$ vertices contains $O(t^3 m)$ hyperedges of size at most $ t$. By a simple double-counting argument, it follows that in any such induced subhypergraph, there exists a vertex that participates in $O(t^4)$ hyperedges of size at most $t$.
		
		Now we are ready to construct the desired $\epsilon$-$t$-net, $N$. We choose a vertex $a$ that participates in $O(t^4)$ hyperedges of size at most $t$ of $H_S$, and add to $N$ all the $t$-sized hyperedges (if exist) that contain $a$. Then we delete $a$, and continue inductively, in the same manner, with the hypergraph induced on $V(H_S) \setminus \{a\}$. We continue in this fashion until all vertices of $S$ are removed. The number of steps is $|S|=O(\frac{t}{\epsilon})$, and each step contributes $O(t^4)$ $t$-tuples to $N$. Hence, in total, we have $|N|=O(t^5 \cdot \frac{1}{\epsilon})$.
		
		We claim that $N$ is an $\epsilon$-$t$-net for $H$. Indeed, consider a hyperedge $e \in \E(H)$ with $|e| \geq \epsilon n$. By the construction of $S$, $e$ contains at least $t$ vertices from $S$. During the process in which we removed one-by-one the vertices of $S$, consider the step in which the size $|e \cap S|$ was reduced from $t$ to $t-1$. At this step, $e$ contained exactly $t$ elements from $S$, that formed a $t$-tuple added to $N$. Hence, $e$ contains a $t$-tuple from $N$. Thus, $N$ is an $\epsilon$-$t$-net for $H$. This completes the proof of Theorem \ref{thm:pseudo_discs}.
	\end{proof}
	
	\begin{remark}\label{rem:hl}
		It is clear from the proof of Theorem~\ref{thm:pseudo_discs} that the theorem holds (up to a factor of $O_t(1)$) for any hypergraph $H$ that satisfies the following properties:
		\begin{enumerate}
			\item Any induced subhypergraph $H' \subset H$ admits an $\epsilon$-net of size $O(1/\epsilon)$, for any $\epsilon \geq \frac{t}{|V(H')|}$.
			
			\item $H$ has a hereditarily linear Delaunay graph.
		\end{enumerate} 
		An example of such a setting is the intersection hypergraph of two families of non-piercing regions in the plane, discussed in Section~\ref{rem:non-piercing-regions}.
	\end{remark}
	
	The existence of a `small'-sized $\epsilon$-$t$-net for the intersection hypergraph of pseudo-discs enables us to apply Algorithm \ref{alg}, and thus to obtain an improvement and a generalization of the result of Chan and Har-Peled~\cite{CH23} mentioned in the introduction.
	The following lemma quantifies the partition of the vertices in steps 4-5 of Alg.~\ref{alg} into `heavy' and `light' ones.
	\begin{lemma}\label{lem:pd}
		Let $\F_1,\F_2$ be two families of pseudo-discs with $|\F_1|=|\F_2|=n$, and let $G=G_{\F_1,\F_2}$ be the bipartite intersection graph of $\F_1,\F_2$. If $G$ is $K_{t,t}$-free and $\ell \geq 2t$, then the number of vertices in $\F_1 \cup \F_2$ whose degree in $G$ is at least $\ell$ is $O(t^6 \frac{n}{\ell})$.
	\end{lemma}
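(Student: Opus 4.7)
The plan is to apply Theorem~\ref{thm:pseudo_discs} to both the primal and dual hypergraphs associated with $G$, and then combine each application with the first-step argument from the introduction (heavy vertex $\Rightarrow$ hyperedge contains a $t$-tuple from the net; $K_{t,t}$-freeness $\Rightarrow$ each $t$-tuple lies in at most $t-1$ hyperedges).

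First, I would set $\epsilon = \ell/n$. The assumption $\ell \geq 2t$ is exactly the hypothesis $\epsilon n \geq 2t$ required by Theorem~\ref{thm:pseudo_discs}. Apply the theorem to the hypergraph $H$ with vertex set $\F_1$ and hyperedges $\{e_b = \{a \in \F_1 : a\cap b \neq \emptyset\} : b \in \F_2\}$ — this is precisely the primal hypergraph $H_G$ — to get an $\epsilon$-$t$-net $N_1 \subset \binom{\F_1}{t}$ with $|N_1| = O(t^5/\epsilon) = O(t^5 n/\ell)$.

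Next, let $B' \subset \F_2$ denote the heavy vertices on the $\F_2$ side. By definition, each $b \in B'$ satisfies $|e_b| \geq \ell = \epsilon n$, so $e_b$ contains some $t$-tuple from $N_1$. Since $G$ is $K_{t,t}$-free, for any fixed $t$-tuple $T \in N_1$ there are at most $t-1$ vertices $b \in \F_2$ whose neighborhood contains $T$ (otherwise those $t$ indices of $T$ together with any $t$ such $b$'s would form a $K_{t,t}$). Hence $|B'| \leq (t-1)|N_1| = O(t^6 n/\ell)$.

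Finally, the situation is symmetric in $\F_1$ and $\F_2$: swapping the two families, Theorem~\ref{thm:pseudo_discs} applies verbatim to the dual hypergraph $H_G^*$ (whose vertex set is $\F_2$ and whose hyperedges are induced by the elements of $\F_1$), yielding an $\epsilon$-$t$-net of the same size $O(t^5 n/\ell)$ and, by the identical $K_{t,t}$-free argument, at most $O(t^6 n/\ell)$ heavy vertices on the $\F_1$ side. Summing the two bounds gives the claim. There is no real obstacle here — the entire lemma is essentially a direct, two-sided application of Theorem~\ref{thm:pseudo_discs}; the only point requiring a brief sentence is the observation that the dual hypergraph fits the hypothesis of Theorem~\ref{thm:pseudo_discs} by symmetry between $\F_1$ and $\F_2$.
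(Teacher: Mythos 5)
Your proposal is correct and follows essentially the same route as the paper: set $\epsilon=\ell/n$, apply Theorem~\ref{thm:pseudo_discs} to get an $O(t^5/\epsilon)$-sized $\epsilon$-$t$-net, and use $K_{t,t}$-freeness to bound the heavy vertices by $(t-1)$ times the net size, giving $O(t^6 n/\ell)$ per side. The paper simply handles the $\F_1$ side by a ``w.l.o.g.'' symmetry remark rather than spelling out the dual application as you do.
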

	
	\begin{proof}[Proof of Lemma \ref{lem:pd}]
		We prove the lemma w.l.o.g.~for the vertices in $\F_2$. Let $\epsilon=\frac{\ell}{n}$. Since $\epsilon n = \ell \geq 2t$, we can apply Theorem \ref{thm:pseudo_discs} to obtain an $\epsilon$-$t$-net $N$ of size $O(\frac{t^5}{\epsilon})$ for the primal hypergraph $H_G$. Each hyperedge of $H_G$ of size at least $\epsilon n =\ell$ contains a $t$-tuple from $N$, but since $G$ is $K_{t,t}$-free, each such a $t$-tuple participates in at most $t-1$ hyperedges.
		
		Therefore, the total number of hyperedges of size at least $\ell$ in $H_G=(\F_1,\E_{\F_2})$ is at most  $(t-1)O(\frac{t^5}{\epsilon})=O(\frac{t^6}{\epsilon})=O(\frac{t^6 n}{\ell})$. This is exactly the number of vertices in $\F_2$ with degree at least $\ell$ in $G$. This completes the proof of Lemma \ref{lem:pd}.
	\end{proof}
	
	Now we are ready to prove Theorem \ref{thm:pd-intro}. The idea of the proof is to apply Algorithm~\ref{alg} with such a choice of $\epsilon,\epsilon'$ in step 1 (which is actually done by choosing the parameter $\ell$ in Lemma \ref{lem:pd}), that the number of `heavy' vertices\footnote{The notions of `heavy' and `light' vertices were explained in the correctness proof of Algorithm~\ref{alg}.} in each part of the graph is reduced by a factor of 2. 
	Intuitively, this factor-2-reduction saves the order of magnitude of the total sum from being affected by the number of steps in the recursive process. This choice can be made since (unlike in the general case of Theorem \ref{thm:eps-t-net}), Theorem \ref{thm:pseudo_discs} holds already 
	when $\epsilon n \geq 2t$. 
	
	\begin{proof}[Proof of Theorem \ref{thm:pd-intro}]
		Denote by $f(n)$ the maximum possible number of edges in $G_{\F_1,\F_2}$, for $\F_1,\F_2$ as in the statement of the theorem. Let $C \geq 1$ be a universal constant such that Lemma \ref{lem:pd} holds with $C \frac{t^6 n}{\ell}$. We prove by induction that our claim holds with $f(n) \leq 8 C t^6 n$.
		
		For $n \leq 8 C t^6$, the assertion is trivial since $|E(G_{\F_1,\F_2})| \leq n^2$. We assume correctness for $\frac{n}{2}$ and prove the assertion for $n$. By Lemma \ref{lem:pd} with $\ell=2Ct^6 \geq 2t$, the number of vertices in $G_{\F_1,\F_2}$ with degree at least $2Ct^6$ is at most $C \frac{t^6 n}{\ell}=\frac{n}{2}$. 
		
		Recall that a vertex in $\F_1 \cup \F_2$ is called `heavy' if its degree in $G_{\F_1,\F_2}$ is at least $\ell$, and otherwise, it is called `light'.
		There are at most $n \ell$ edges in $G_{\F_1,\F_2}$ that connect a light vertex of $\F_1$ (resp., $\F_2$) with some vertex of $\F_2$ (resp., $\F_1$). The number of edges in $G_{\F_1,\F_2}$ that connect two heavy vertices is at most $f(\frac{n}{2})$, and by the induction hypothesis, $f(\frac{n}{2}) \leq 4Ct^6n$. Therefore, the total number of edges in $G_{\F_1,\F_2}$ is at most $(2Ct^6+2Ct^6+4Ct^6)n=8Ct^6n$.
	\end{proof}
	
\subsection{$K_{t,t}$-free Intersection Graph of Families of Non-Piercing Regions}
\label{rem:non-piercing-regions}

Theorem \ref{thm:pd-intro} can be readily generalized (albeit, with a slightly weaker bound of $O(t^8n)$) to the more general setting of bipartite intersection graphs of two families of \emph{non-piercing regions in the plane} -- i.e., families $\F$ of regions such that for any $S,T \in \F$, $S \setminus T$ is connected. 

To see this, we may repeat the proof of the theorem step-by-step. First, as follows from Remark~\ref{rem:hl} (except for the exact dependence on $t$ which we compute here), Theorem~\ref{thm:pseudo_discs} can be generalized to this setting with the weaker bound of $O(t^7 \cdot \frac{1}{\epsilon})$.
Indeed, the existence of an $O(\frac{1}{\epsilon})$-sized $\epsilon$-net in this setting is well-known, as was mentioned at the beginning of the proof of Theorem~\ref{thm:pseudo_discs}. 	
Raman and Ray (who introduced the notion of non-piercing regions in~\cite{RR20}) showed that the Delaunay graph of the intersection hypergraph of two families of non-piercing regions is planar, and thus, Theorem~\ref{thm:AKP} implies that the number of hyperedges of size at most $t$ in any induced hypergraph on $m$ vertices is at most $O(t^5m)$. (Note that this bound is weaker than the bound $O(t^3m)$ which we obtain in the proof of Theorem~\ref{thm:pseudo_discs}, since here we do not know that the VC-dimension of the hypergraph is at most $4$). The rest of the proof of Theorem~\ref{thm:pseudo_discs} applies verbatim, and so does the derivation of Theorem~\ref{thm:pd-intro} from Theorem~\ref{thm:pseudo_discs}. The obtained bound is $O(t^8n)$ instead of $O(t^6n)$, since the factor $t^2$ loss carries along the proof.

	\section{$K_{t,t}$-free Bipartite Intersection Graphs of Axis-parallel Rectangles}
	\label{sec:rectangles}
	
	In this section we prove Theorem~\ref{thm:rect-intro} -- a sharp upper bound on the number of edges in a $K_{t,t}$-free bipartite intersection graph of two families of axis-parallel rectangles. As was explained in the introduction, the current knowledge on $\epsilon$-$t$-nets for intersection hypergraphs of families of axis-parallel rectangles is not sufficient for obtaining a sharp bound for Zarankiewicz's problem using our $\epsilon$-$t$-net approach. Hence, we prove the theorem by an entirely different method that uses the sharp bound of Chan and Har-Peled on the number of edges in a $K_{t,t}$-free incidence graph of points and axis-parallel rectangles~\cite{CH23}, along with other combinatorial and geometric techniques. Then, we use similar techniques to obtain an improved bound for $K_{t,t}$-free incidence graphs of points and pseudo-discs.
	
	\subsection{Proof of Theorem~\ref{thm:rect-intro}} 
	
	Let us restate the theorem, in a slightly stronger form.   
	
	%without using the relation established above to $\eps$-$t$-nets. Let us recall a bit more general statement of it:
	
	\medskip 
	\noindent \textbf{Theorem \ref{thm:rect-intro}.}
	Let $t \geq 2$.
	Let $A,B$ be two families of axis-parallel rectangles, $|A|=n,|B|=m$, s.t. $A \cup B$ is in general position. If $G_{A,B}$ is $K_{t,t}$-free, then $$|E(G_{A,B})|=O\left(t(n+m) \frac{\log (n+m)}{\log \log (n+m)} + t^6 n+tm) \right).$$

	\begin{proof}
		Any intersection between $a \in A$ and $b \in B$ belongs to exactly one of four types:
		\begin{enumerate}
			\item The rectangle $a$ is strictly contained in the rectangle $b$.
			
			\item The rectangle $b$ is strictly contained in the rectangle $a$.
			
			\item A vertical edge of $b$ intersects a horizontal edge of $a$.
			
			\item A vertical edge of $a$ intersects a horizontal edge of $b$.
		\end{enumerate}
		%The two other types 1',2' are symmetric by replacing $a$ and $b$.
		
		We bound separately the numbers of intersections of each of these types. 
		
		\medskip \noindent \textbf{Intersections of type~1.} We define a bipartite graph $G$ whose vertices are all the corners of rectangles in $A$, and all the rectangles in $B$. A corner $x$ is adjacent to a rectangle $b \in B$ if $x \in b$. Clearly, $|V(G)|=4n+m$. We observe that $G$ is $K_{4t-3,4t-3}$-free, since if some $4t-3$ corners are all contained in the same $4t-3$ rectangles of $B$, then these $4t-3$ corners belong to at least $t$ different rectangles in $A$, and this contradicts the assumption that $G_{A,B}$ is $K_{t,t}$-free. Therefore, we can apply the following result of Chan and Har-Peled~\cite{CH23}:
		\begin{lemma}[\cite{CH23}, Lemma 4.4]\label{lem:CH}
			Let $P$ be a set of $n$ points in $\Re^2$, and let $\R$ be a family of $m$ axis parallel rectangles in $\Re^2$. If the incidence graph $G_{P,\R}$ is $K_{t,t}$-free, then $E(G_{P,\R})=O_{\epsilon}(tn \frac{\log n}{\log \log n}+ tm \log^{\epsilon}n)$, for any constant $\epsilon>0$.
		\end{lemma}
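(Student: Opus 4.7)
The plan is to reduce the point-rectangle incidence problem to many one-dimensional point-interval subproblems via a segment-tree decomposition on the $x$-coordinate, apply a sharp pattern-avoidance bound to each subproblem, and then carefully balance the branching factor to separate the $n$- and $m$-dependences in the final estimate.

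\emph{Reduction to 1D.} Sort $P$ by $x$-coordinate and build a segment tree of branching factor $b$ over the sorted point sequence, where $b$ is a parameter to be fixed later. Each internal node $v$ owns a canonical vertical slab $\sigma_v$ containing $n_v$ points. For each rectangle $R\in\R$, the $x$-projection of $R$ decomposes in the usual segment-tree fashion into at most $O(b\log_b n)$ canonical slabs, and $R$ is assigned to the corresponding nodes. At any such node $v$, the rectangle $R$ fully spans $\sigma_v$ horizontally, so $R\cap\sigma_v$ is purely a horizontal strip: a point $p\in\sigma_v$ is incident to $R$ iff the $y$-coordinate of $p$ lies in the $y$-interval of $R$. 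Hence the incidence problem at $v$ is a one-dimensional $K_{t,t}$-free point-interval incidence problem (the $K_{t,t}$-freeness is inherited from $G_{P,\R}$).

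\emph{1D bound and assembly.} One-dimensional point-interval incidences are equivalent, via the correspondence $[a,b]\mapsto(a,b)$, to 2D point-quadrant dominance incidences. Sorting both sides encodes the relation as a 0-1 matrix avoiding the $t\times t$ all-ones submatrix, and the matrix inherits a strong monotonicity structure from the sortedness of intervals. A Chazelle-type tight analysis of this constrained pattern-avoidance problem gives $O\!\left(t\,(n_v \tfrac{\log n_v}{\log\log n_v} + m_v)\right)$ incidences at each node $v$. Summing over all segment-tree nodes, and using $\sum_v n_v = O(n\log_b n)$ and $\sum_v m_v = O(m\,b\,\log_b n)$, yields
\[
|E(G_{P,\R})| \;=\; O\!\left( t\,n\,\tfrac{\log n}{\log\log n}\cdot \log_b n \;+\; t\,m\,b\,\log_b n \right).
\]

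\emph{Balancing the branching factor and the main obstacle.} No single static choice of $b$ can simultaneously achieve $O(tn\log n/\log\log n)$ on the $n$-term and $O_\epsilon(tm\log^\epsilon n)$ on the $m$-term: shrinking $b$ to control the $m$-factor blows up the depth $\log_b n$, while enlarging $b$ inflates the $m$-factor directly. My plan to close this gap is to bootstrap the theorem against itself: rather than invoking Chazelle's 1D bound at each node, recursively apply the main theorem to the node-level instance (which, after the slab restriction, is itself a $K_{t,t}$-free point-interval incidence graph, a degenerate axis-parallel rectangle problem). Taking the currently-proved weaker bound as the inductive hypothesis, each recursive round replaces one $\log n$ factor on the $m$-side by a constant, at the cost of a multiplicative constant depending on the branching factor used at that level. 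Iterating the recursion $O(1/\epsilon)$ times drives the $m$-exponent down from $O(\log^2 n/\log\log n)$ to $\log^\epsilon n$, while keeping the $n$-term fixed at $O(tn\log n/\log\log n)$ through a careful choice of $b$ at each level (roughly $b=\log n$ at the outermost level, with geometrically decreasing branching factors further down). The technical heart of the argument is verifying that (i) the $K_{t,t}$-free property and the sortedness structure required by the 1D pattern bound are preserved at every recursion level, and (ii) the accumulated constants depend only on $t$ and $\epsilon$, not on $n$ or $m$; this bookkeeping is the main obstacle.
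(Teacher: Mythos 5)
This statement is quoted by the paper as Lemma~4.4 of Chan and Har-Peled~\cite{CH23}; the paper gives no proof of it, so your attempt can only be measured against the proof in that reference, which it does not reproduce, and on its own terms it has a genuine gap. First, the per-node bound you invoke is misplaced: once a rectangle spans the slab $\sigma_v$, the node instance is a one-dimensional $K_{t,t}$-free point--interval problem, and for that the sharp bound is simply $O\bigl(t(n_v+m_v)\bigr)$ --- an interval containing at least $t$ of the (sorted) points contains a consecutive $t$-tuple, and each consecutive $t$-tuple lies in at most $t-1$ intervals, so $\sum_i (d_i-t+1)^+\le (t-1)n_v$. No $\log n_v/\log\log n_v$ factor arises in 1D, and the appeal to a ``Chazelle-type'' constrained pattern-avoidance bound is both unnecessary and unjustified (forbidding an all-ones $t\times t$ submatrix alone gives only Zarankiewicz-type bounds; the quasilinear regime is a genuinely two-dimensional phenomenon). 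This signals that the hard part of the lemma is not the node subproblem but the aggregation.

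Second, and more seriously, even with the correct linear node bound your assembly gives $O\bigl(tn\log_b n+tmb\log_b n\bigr)$, which, as you concede, cannot be balanced to $O_\epsilon\bigl(tn\tfrac{\log n}{\log\log n}+tm\log^\epsilon n\bigr)$, and the bootstrapping step you propose to close this is not a proof and does not plausibly become one. The extra logarithms on the $m$-side come from the $O(b\log_b n)$ multiplicity with which each rectangle is assigned to segment-tree nodes; they are a property of the decomposition, not of the per-node bound. Recursively applying the theorem to the node instances cannot help, because those instances are one-dimensional and already solved with a linear bound --- there is no $\log n$ factor ``at the node'' left to replace by a constant. Removing the $m$-side overhead requires a structurally different scheme in which each rectangle contributes only $O(1)$ pieces per level of the recursion (e.g., splitting a rectangle into two three-sided pieces plus a fully spanning middle, handling spanning/three-sided/quadrant pieces by linear bounds, and passing each contained rectangle to a single child), together with a careful choice of fan-out; this is essentially what Chan and Har-Peled do, and it is precisely the machinery your sketch leaves unspecified. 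As written, the proposal therefore does not establish the lemma.
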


		\noindent Since the two sides of $V(G)$ contain $4n$ and $m$ vertices, respectively, Lemma~\ref{lem:CH} (applied with any fixed $0<\epsilon<1$) yields $$|E(G)|=O\left(t(n+m) \frac{\log (n+m)}{\log \log (n+m)}\right).$$ 
		
		Note that each intersection of type 1 contributes exactly 4 edges to $G$. Hence, the number of intersections of type 1 is $O(t(n+m) \frac{\log (n+m)}{\log \log (n+m)})$. By symmetry, the same bound holds for the number of intersections of type 2.
		
		\medskip \noindent \textbf{Intersections of type 3.}
		Define a bipartite graph $K=K_{S,S'}$ whose vertices are the horizontal edges of rectangles in $A$ (that we call \emph{horizontal vertices of $K$}), and the vertical edges of rectangles in $B$ (that we call \emph{vertical vertices of $K$}). A vertical vertex is adjacent to a horizontal vertex if the corresponding edges cross. Each intersection of type 3 contributes either 1, 2, or 4 edges to $H$. Therefore, the number of such intersections is at most $4|E(K)|$.
		
		Denote the vertices of $S$ (i.e., the vertical vertices of $K$) by $v_1,v_2,\ldots,v_m$ (in an arbitrary order). For $1 \leq i \leq 2m$, let $d_i$ be the degree of $v_i$ in $K$. Clearly, $|E(K)|= \Sigma_{i=1}^{2m}d_i$. Let $\F \subset \binom{S'}{2t-1}$ be the family of all \emph{canonical $(2t-1)$-tuples} of horizontal vertices, where a $(2t-1)$-tuple $T$ of horizontal vertices is called canonical if there exists some vertical segment $L$ (not necessarily a vertical vertex!) that intersects exactly the vertices of $T$ among all the horizontal vertices (i.e., we have $\{x \in S': x \cap L \neq \emptyset\}=T$).
		
		\begin{claim}\label{cl:sizeF}
			In the above notations, $|\F|=O(t^5n)$.
		\end{claim}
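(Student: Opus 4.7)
My plan is to realize $\F$ as the $(2t-1)$-sized hyperedges of a natural geometric hypergraph and bound this via Theorem~\ref{thm:AKP}. Concretely, I will define $H=(S',\E)$ where $S'$ is the set of $2n$ horizontal edges of rectangles in $A$, and a set $T\subseteq S'$ is a hyperedge of $H$ iff $T=\{h\in S':h\cap L\neq\emptyset\}$ for some vertical segment $L$ in the plane. By construction, $\F$ equals the collection of hyperedges of $H$ of cardinality exactly $2t-1$, and the same description applies hereditarily to every induced subhypergraph $H_{V'}$ for $V'\subseteq S'$.

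The key step is to show that the Delaunay graph of $H_{V'}$ -- namely, the pairs $\{h_1,h_2\}\subseteq V'$ realizable as $L\cap V'$ for some vertical segment $L$ -- has $O(|V'|)$ edges. This graph is the \emph{vertical visibility graph} of the horizontal segments in $V'$: a pair is a Delaunay edge iff there exists an $x$-coordinate at which both $h_1,h_2$ span and are consecutive in the vertical ordering of $\{h\in V': h \text{ spans } x\}$. I will bound the number of such visibility pairs via a left-to-right sweep: at each of the $2|V'|$ endpoint events the sorted active set undergoes a single insertion or deletion, creating at most two new adjacent pairs. (Equivalently, one can charge each visibility pair to a distinct trapezoid in the vertical decomposition of $V'$, which has $O(|V'|)$ cells.) This yields $|E(\Delta(H_{V'}))|=O(|V'|)$ uniformly in $V'$.

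Having established the hereditary linearity of the Delaunay graph, I apply Theorem~\ref{thm:AKP}. Part~(i) bounds the VC-dimension of $H$ by a constant $d\le 2C$, where $C$ is the constant from the Delaunay-graph bound; part~(ii) then yields that the number of hyperedges of $H$ of size at most $2t-1$ is $O((2t-1)^{d-1}\cdot|S'|)=O(t^{d-1}\,n)$. With $C\le 3$ from a careful accounting of the sweep argument, one obtains $d\le 6$, hence $|\F|=O(t^5 n)$, as claimed.

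The principal obstacle is controlling the constant $C$ in the Delaunay-graph bound tightly enough to match the exponent $t^5$ rather than a larger polynomial in $t$. This requires care in avoiding the over-counting of visibility pairs that reappear across multiple slabs of the vertical decomposition, and in handling general-position degeneracies at segment endpoints. Any mild deterioration of $C$ would merely inflate the polynomial-in-$t$ factor while preserving the linear dependence on $n$, which is what is needed for Theorem~\ref{thm:rect-intro}.
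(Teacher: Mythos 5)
Your proposal is correct and follows the paper's overall scheme: you form the same hypergraph (the $2n$ horizontal edges of rectangles in $A$ as vertices, stabbing sets of vertical segments as hyperedges), observe that $\F$ is its collection of hyperedges of size $2t-1$, and invoke Theorem~\ref{thm:AKP} with a hereditarily linear Delaunay-graph bound, $C=3$, hence $d\le 2C=6$ and $|\F|=O(t^{5}n)$. Where you genuinely depart from the paper is the proof of the linear Delaunay bound. The paper shows the Delaunay graph is \emph{planar}: each Delaunay edge is drawn as a three-piece polygonal path ending at right endpoints of the horizontal edges, one checks that two vertex-disjoint Delaunay edges cannot cross in this drawing, and planarity follows from an easy case of the Hanani--Tutte theorem, giving at most $3\ell-6$ edges on $\ell$ vertices. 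You instead identify the Delaunay graph of every induced subhypergraph with the vertical visibility graph of the corresponding horizontal segments (correct, since a vertical segment meeting $h_1$ and $h_2$ meets everything between them at that abscissa, so only segments of $V'$ matter) and bound it by a sweep: an insertion creates at most two new adjacent pairs and a deletion at most one, so the total is at most $3|V'|$, and in fact strictly less once the first insertions and last deletions are accounted for, which is what the strict inequality in Theorem~\ref{thm:AKP} requires; this yields $C=3$ with no planarity argument. Both routes give the same constant, and yours is arguably more elementary, while the paper's drawing argument gives the stronger structural fact of planarity. One small caution: your parenthetical alternative of charging visibility pairs to trapezoids of the vertical decomposition only gives roughly $3|V'|+1$ cells, which misses the strict bound needed for $C=3$ and would slightly inflate the exponent of $t$; the event-by-event sweep accounting is the version to keep.
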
 
		
		We leave the proof of Claim \ref{cl:sizeF} to the end of this section, and continue with the proof of Theorem \ref{thm:rect-intro} (assuming the claim).
		
		For each $1 \leq i \leq 2m$, we define $x_i$ to be the number of canonical $(2t-1)$-tuples of horizontal vertices which the vertical vertex $v_i$ intersects. That is,
		$$ x_i=|\{\{h_1,\ldots,h_{2t-1}\} \in \F :  \forall 1\leq j \leq 2t-1, v_i \cap h_j \neq \emptyset\}|.$$ 
		%where each $h_j$ is a horizontal vertex of $K$. 
		We would like to obtain lower and upper bounds on $\Sigma_{i=1}^{2m}x_i$. 
		
		On the one hand, $\Sigma_{i=1}^{2m}x_i \leq (2t-2) |\F|$. Indeed, for any canonical $(2t-1)$-tuple $\{h_1,\ldots,h_{2t-1}\} \in \mathcal{F}$, at most $2t-2$ $v_i$'s intersect all of $h_1,\ldots,h_{2t-1}$, since otherwise,  $G_{A,B}$ contains $K_{t,t}$ as a subgraph (as the $h_j$'s must belong to at least $t$ different rectangles in $A$ and the $v_i$'s must belong to at least $t$ different rectangles in $B$). By Claim \ref{cl:sizeF}, this implies 
		\begin{equation}\label{eq:UB}
			\Sigma_{i=1}^{2m}x_i=(2t-2) \cdot O(t^5 n) = O(t^6n).
		\end{equation}
		On the other hand, for each $1 \leq i \leq 2m$, we have $d_i-2t+2 \leq x_i$. Indeed, if $d_i \leq 2t-2$ the inequality is trivial. If $d_i \geq 2t-1$ and $v_i$ intersects (w.l.o.g.) the horizontal vertices $h_1,\ldots,h_{d_i}$ in this order, then each consecutive $(2t-1)$-subsequence of $h_1,\ldots,h_{d_i}$ belongs to $\F$, since it is the intersection of some subsegment of $v_i$ with the set of horizontal vertices. Therefore, 
		\begin{equation}\label{eq:LB}
			\Sigma_{i=1}^{2m} (d_i-2t+2) \leq \Sigma_{i=1}^{2m}x_i.
		\end{equation}
		Combining (\ref{eq:UB}) and (\ref{eq:LB}) together, we obtain 
		$$\Sigma_{i=1}^{2m} (d_i-2t+2) = O(t^6 n ),$$
		and thus,
		$$|E(K)|=\Sigma_{i=1}^{2m} d_i = O(t^6n +tm).$$
		Hence, the number of intersections of type 3 is $O(t^6n+tm)$. By symmetry, the same bound applies to the number of intersections of type 4. This completes the proof of the theorem (assuming Claim~\ref{cl:sizeF}).
	\end{proof}
	
	The only part that remains is the proof of Claim \ref{cl:sizeF}.
	%To this end we need a more detailed version of Theorem \ref{thm:AKP}, also proved in \cite{AKP21}:
	
	%\medskip
	
	%\noindent \textbf{Theorem \ref{thm:AKP} - extended version (\cite{AKP21}).}
	%	Let $H=(V,\E)$ be a hypergraph with VC-dimension $d$. Suppose there exists $C>0$ such that for every $V' \subset V$, the Delaunay graph of the hypergraph induced by $V'$ has $<C|V'|$ edges. Then $d \leq 2C$ and $H$ has $O(t^{d-1}|V|)$ hyperedges of size$\leq t$.   
	
	\begin{proof}[Proof of Claim \ref{cl:sizeF}.]
		Define a hypergraph $J$ whose vertices are the horizontal edges of the rectangles in $A$ --- $h_1, \ldots, h_{2n}$, and each vertical segment $v$ (which is not necessarily a vertical vertex!) defines a hyperedge $e_v$ which is the subset of  $\{h_1, \ldots, h_{2n}\}$ that $v$ intersects. 
		
		Note that $\F$ is the set of hyperedges of size $2t-1$ of $J$. We would like to bound the size of this set by $O(t^5n)$. To this end,  we prove that the Delaunay graph of $J$, $Del(J)$, has a hereditarily linear number of edges. It is clearly sufficient to prove that $|E(Del(J))|$ is linear in $|V(Del(J))|=2n$. 
		
		Let us describe a planar drawing of $Del(J)$. We represent each vertex of $Del(J)$ by the right endpoint of the corresponding horizontal edge. Each edge $v=\{h_i,h_j\} \in E(Del(J))$ is drawn as a 3-polygonal path that starts at the right endpoint of $h_i$, continues on the subsegment of $v$ that connects $h_i$ and $h_j$, and continues on $h_j$ towards its right endpoint (see Figure \ref{fig:fig1}).
		
		\begin{figure}[tb]		
			\begin{center}
				\scalebox{0.7}{
					\includegraphics{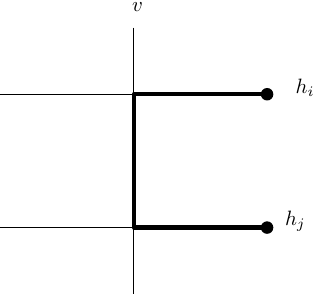}
				}
			\end{center}
			\caption{The planar drawing of $v=\{h_i,h_j\}$ (in bold).}
			\label{fig:fig1}
		\end{figure}
		
		The drawing described here is a planar drawing of $Del(J)$. It is easy to verify that if for some four distinct vertices $h_i,h_j,h'_i,h'_j$, the planar drawing of $v=\{h_i,h_j\}$ intersects the planar drawing of $v'=\{h_i',h_j'\}$, then either $v$ or $v'$ cannot be an edge of $Del(J)$ (see Figure \ref{fig:fig2}). As $Del(J)$ admits a planar drawing in which no two vertex-disjoint edges intersect, it is planar (e.g., by an easy special case of the Hanani-Tutte theorem).

		\begin{figure}[tb]		
			\begin{center}
				\scalebox{0.7}{
					\includegraphics{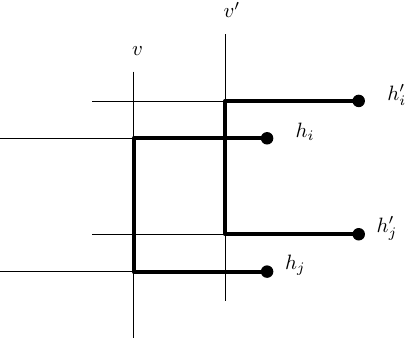}
				}
			\end{center}
			\caption{In this figure, the planar drawing of $\{h_i,h_j\}$ intersects the planar drawing of $\{h_i',h_j'\}$. However, $v'$ intersects three horizontal segments, and therefore $\{h_i',h_j'\}  \notin E(Del(J))$.}
			\label{fig:fig2}
		\end{figure}
		
		Thus, each subgraph of $Del(J)$ on $\ell$ vertices contains at most $3 \ell -6$ edges. By Theorem \ref{thm:AKP}, this implies that the number of hyperedges of size at most $2t-1$ in $J$ is $O(t^5n)$. In particular, $|\F|=O(t^5n)$, as asserted.
	\end{proof}
	
	\begin{remark}
		Note that some intersections between two rectangles $a \in A, b \in B$ were counted twice. In the counting of type 1, we actually counted all the intersections in which a corner of $a$ is contained in $b$, and not only the ones in which $a \subseteq b$. 
	\end{remark}
	
	\begin{remark}
		The proof of Theorem~\ref{thm:rect-intro} readily implies that the number of edges in a $K_{t,t}$-free bipartite intersection graph $G_{A,B}$ of two families of axis-parallel frames (i.e., boundaries of rectangles) with $|A|=n, |B|=m$ is $O(t^6n+tm)$. Indeed, the only possible types of intersections between a pair of axis-parallel frames are intersections of types 3, 4 presented above. In the proof, the number of such intersections is bounded by $O(t^6n+tm)$.  
	\end{remark}

	\subsection{$K_{t,t}$-free incidence graph of points and pseudo-discs}\label{sec:ptpd}
	
	As mentioned already, our Theorem \ref{thm:pd-intro} both improves and generalizes to intersection graphs the following result of \cite{CH23}:
	
	\begin{theorem}[\cite{CH23}, Corollary 5.8] \label{thm:CHpd}
		Let $P$ be a set of $n$ points in the plane, and let $\F$ be a set of $m$ $y$-monotone pseudo-discs in the plane. If the incidence graph $G_{P, \F}$ is $K_{t,t}$-free, then $|E(G)|=O(
		tn + tm(\log \log m + \log t))$.
	\end{theorem}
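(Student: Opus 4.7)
The plan is to adapt the double-counting/canonical-tuples framework from the proof of Theorem~\ref{thm:rect-intro}, with the points of $P$ playing the role of horizontal vertices and the pseudo-discs of $\F$ playing the role of vertical vertices. The structural ingredient that replaces the Hanani--Tutte planarity argument used there is Keszegh's theorem~\cite{Kes17+}: the Delaunay graph of the incidence hypergraph of points and any family of pseudo-discs is planar. This is the same structural fact exploited in the proof of Theorem~\ref{thm:pseudo_discs}, so it is natural to try to use it here.

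Concretely, I would first consider the hypergraph $J$ on vertex set $P$ whose hyperedges are $\{D' \cap P : D' \in \widehat{\F}\}$, where $\widehat{\F}$ denotes the family of all $y$-monotone Jordan regions $D'$ such that $\F \cup \{D'\}$ is still a family of pseudo-discs. Every induced subhypergraph of $J$ is itself the incidence hypergraph of a subset of $P$ with a family of pseudo-discs, so its Delaunay graph is planar by Keszegh's theorem. Theorem~\ref{thm:AKP} then yields that every induced subhypergraph of $J$ on $\ell$ vertices has $O(t^5 \ell)$ hyperedges of size at most $2t-1$. In particular, the set $\F^{\ast}$ of \emph{canonical $(2t-1)$-tuples} of $P$, defined as $(2t-1)$-subsets of $P$ realized as $D' \cap P$ for some $D' \in \widehat{\F}$, satisfies $|\F^{\ast}| = O(t^5 n)$. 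For each $D \in \F$ with $d_D := |D \cap P| \geq 2t-1$, I would then use $y$-monotonicity to produce a continuous one-parameter family of truncations of $D$ by a horizontal chord; each such truncation lies in $\widehat{\F}$, and as the chord sweeps from top to bottom, the incidence set $D' \cap P$ loses one point at a time, generating at least $d_D - (2t-2)$ distinct canonical $(2t-1)$-subtuples of $D \cap P$. Conversely, by $K_{t,t}$-freeness, each canonical tuple is contained in at most $t-1$ incidence sets $D \cap P$ for $D \in \F$. Double-counting gives $\sum_{D \in \F}(d_D - (2t-2)) \leq (t-1)|\F^{\ast}| = O(t^6 n)$, hence $|E(G)| = \sum_D d_D = O(t^6 n + tm)$.

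The main obstacle is justifying that horizontal-chord truncations of $D \in \F$ really do lie in $\widehat{\F}$, i.e., that such a truncation intersects the boundary of every other $D_0 \in \F$ in at most two points; without this, Keszegh's theorem cannot be invoked for the enlarged hypergraph $J$. For $y$-monotone pseudo-discs this should follow from a case analysis on how the horizontal chord interacts with $\partial D_0$ and on the relative vertical ranges of $D$ and $D_0$, but the analysis is delicate and might force a mild restriction on which truncations we use (for example, only horizontal chords passing through a point of $P$). If such a restriction costs more than a constant factor per pseudo-disc, the loss would explain the $\log \log m + \log t$ overhead in the statement; otherwise, the clean $O(t^{O(1)} n + tm)$ bound sketched above is actually stronger than the claim, consistent with the authors' stated goal of improving the bound of~\cite{CH23}.
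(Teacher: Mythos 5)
You are trying to prove a statement that this paper only quotes from \cite{CH23}; the paper's own closest argument is the proof of Theorem~\ref{thm:pts_pd}, which uses exactly the canonical-tuple double counting you propose (and, note, even a completed version of your sketch would give $O(t^6 n+tm)$, which is incomparable to the stated $O(tn+tm(\log\log m+\log t))$ --- better in $m$, worse in $t$ --- so you would at best be reproving a variant of Theorem~\ref{thm:pts_pd}, not Theorem~\ref{thm:CHpd} itself). Your counting skeleton is fine, but the structural step has a genuine gap. Keszegh's theorem, and hence Theorem~\ref{thm:AKP}, requires the regions realizing the hyperedges to be \emph{pairwise} pseudo-discs, whereas your $\widehat{\F}$ only asks each $D'$ to be compatible with $\F$ individually; two members of $\widehat{\F}$ may cross each other arbitrarily often. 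Consequently the claim that every induced subhypergraph of $J$ has a planar Delaunay graph is false, and so is the bound $|\F^{\ast}|=O(t^5n)$: take $\F$ to be a single pseudo-disc far away from $P$ and $P$ in convex position; then for every subset $S\subset P$ with $|S|=2t-1$ a slight thickening of its convex hull is a $y$-monotone region lying in $\widehat{\F}$ and cutting off exactly $S$, so every $(2t-1)$-subset is canonical and $|\F^{\ast}|=\binom{n}{2t-1}$, while $\mathrm{Del}(J)$ contains $K_n$.

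The obstacle you flag at the end is also real and is not fixable by a mild restriction: a horizontal-chord truncation of $D_1\in\F$ need not even be compatible with the other members of $\F$. If $D_2\in\F$ pokes out of the bottom of $D_1$ (two boundary crossings, both low) and you truncate $D_1$ at a height where the horizontal slice of $D_2$ is strictly contained in that of $D_1$, the truncated boundary crosses $\partial D_2$ four times (the two original crossings plus two crossings of the chord), so this truncation is not in $\widehat{\F}$ and your sweep fails to produce the canonical subtuples needed for the lower bound $x_D\geq d_D-(2t-2)$. The paper's way around precisely this difficulty in Theorem~\ref{thm:pts_pd} is Pinchasi's shrinking lemma (Lemma~\ref{Lemma:Rom1}, \cite{Pin14}): a pseudo-disc can be continuously shrunk toward any point inside it so that at every moment the \emph{entire} family remains a family of pseudo-discs. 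Shrinking each disc toward each point it contains and keeping one representative per equivalence class yields a genuine finite pseudo-disc family $B'$, which simultaneously guarantees that every point of a heavy disc lies in some canonical $t$-tuple (giving $\lfloor d_i/t\rfloor\leq x_i$) and lets one bound the number of canonical $t$-tuples by $O(t^2n)$ directly via \cite{BPR13}, with no planarity step and no $y$-monotonicity assumption; double counting then gives $O(t^4n+tm)$. To salvage your approach you would have to replace chord truncations by such a family-preserving shrinking (or restrict the canonical tuples to ones realized by a pairwise-compatible family), since the compatibility you hope to verify is false in general.
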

	
	In the setting of Theorem \ref{thm:CHpd}, where one of the families consists of points in $\Re^2$, we can obtain an improved bound (in terms of the dependence on $t$), using a variant of the method presented above.
	%improve the dependance in $t$ from Theorem \ref{thm:pd-intro}, even without using the relation to $\eps$-$t$-nets. 
	We prove:
	\begin{theorem}\label{thm:pts_pd}
		Let $A$ be a family of $n$ points in the plane, and let $B$ be a family of $m$ pseudo-discs. If $G_{A,B}$ is $K_{t,t}$-free, then $|E(G_{A,B})|= O (t^4n+tm)$.
	\end{theorem}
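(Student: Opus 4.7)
The plan is to adapt the combinatorial argument used to bound intersections of type~3 in the proof of Theorem~\ref{thm:rect-intro}, replacing vertical/horizontal edge-crossings by the point/pseudo-disc incidences of $G_{A,B}$. Writing $d_b := |A \cap b|$, the goal is to bound $|E(G_{A,B})| = \sum_{b \in B} d_b$. Pseudo-discs with $d_b \leq 2t-2$ (``light'') contribute at most $(2t-2)m = O(tm)$ trivially, so the main task is to bound the contribution of the ``heavy'' pseudo-discs $(d_b \geq 2t-1)$ via a double-counting argument on a suitable family of canonical tuples in $A$.

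Concretely, define the set of \emph{canonical $(2t-1)$-tuples}
\[
\mathcal{F} := \{\, T \in \binom{A}{2t-1} \;:\; T = A \cap b' \text{ for some Jordan region $b'$ such that $B \cup \{b'\}$ is a family of pseudo-discs} \,\}.
\]
By Keszegh~\cite{Kes17+}, the Delaunay graph of any incidence hypergraph of points and pseudo-discs is planar (so hereditarily linear with $C = 3$), and the VC-dimension of such a hypergraph is at most~$4$. Applying Theorem~\ref{thm:AKP} to the hypergraph on $A$ whose hyperedges are all sets $A \cap b'$ with $b'$ compatible with $B$, the number of hyperedges of size at most $2t-1$ is $O((2t-1)^{3}n) = O(t^{3} n)$, and in particular $|\mathcal{F}| = O(t^{3} n)$.

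The crux of the argument, and the main obstacle, is the per-$b$ lower bound: for each $b \in B$ with $d_b \geq 2t-1$, the quantity
\[
x_b := \bigl|\{ T \in \mathcal{F} : T \subseteq A \cap b \}\bigr|
\]
satisfies $x_b \geq d_b - (2t-2)$. This is the analog of the ``consecutive subsequence'' observation in the rectangles proof, which there was handed down for free by the linear order of horizontal edges along a vertical segment. I plan to prove it by a sweeping construction: pick an interior point $o_b \in b$ disjoint from $A$, order the points of $A \cap b$ cyclically by angle around $o_b$, and show that each of the $d_b$ consecutive cyclic arcs of length $2t-1$ is realized by a sub-region $b' \subseteq b$ obtained from $b$ by small inward pushes of its boundary that peel off the non-arc points one at a time. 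Sufficiently local boundary pushes preserve the pseudo-disc property with respect to each element of $B$, so every such arc belongs to $\mathcal{F}$, giving even $x_b \geq d_b$.

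Given this lower bound, standard double-counting of pairs $(b,T)$ with $T \in \mathcal{F}$ and $T \subseteq A \cap b$ concludes the proof. Since $G_{A,B}$ is $K_{t,t}$-free and $|T| = 2t-1 \geq t$, each $T \in \mathcal{F}$ is contained in at most $t-1$ hyperedges $A \cap b$, whence $\sum_{b} x_b \leq (t-1)|\mathcal{F}| = O(t^{4} n)$. Combining with $x_b \geq d_b - (2t-2)$ gives
\[
\sum_{b:\, d_b \geq 2t-1}\bigl(d_b - (2t-2)\bigr) \leq O(t^{4} n),
\]
and adding the light contribution of at most $(2t-2)m$ yields the desired bound $|E(G_{A,B})| = O(t^{4} n + tm)$.
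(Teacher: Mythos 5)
Your double-counting skeleton is the same as the paper's, but both of its two load-bearing steps have genuine gaps. First, the upper bound $|\mathcal{F}|=O(t^{3}n)$ fails for your definition of canonical tuples. You admit $T=A\cap b'$ for an arbitrary Jordan region $b'$ that is \emph{individually} compatible with $B$, but the regions realizing different tuples need not cross each other at most twice, so the hypergraph they induce on $A$ is not one induced by a pseudo-disc family, and neither Keszegh's planarity statement nor Theorem~\ref{thm:AKP} applies to it. Concretely, if $B$ consists of one disc containing all of $A$ (plus discs far from $A$), then every subset of $A$ can be cut off by a thin snake-like region inside that disc, which meets each boundary of $B$ at most twice; your $\mathcal{F}$ then has $\binom{n}{2t-1}$ members and the associated Delaunay graph is complete, so the claimed $O(t^{3}n)$ bound is false as stated (the theorem itself is not contradicted, since in that example $G_{A,B}$ has few edges, but your counting lemma is). The paper avoids this by only counting tuples realized by shrunk copies of members of $B$, obtained via Pinchasi's Lemma~\ref{Lemma:Rom1}, so that the enlarged family $B'$ is itself a pseudo-disc family; the number of $t$-element subsets of $A$ realized by such a family is then $O(t^{2}n)$ by \cite{BPR13}.

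Second, the per-pseudo-disc lower bound $x_b\ge d_b-(2t-2)$ is unjustified, and it is exactly the hard point. ``Small inward pushes of the boundary that peel off the non-arc points one at a time'' is not known to preserve the pseudo-disc property: being a pseudo-disc family is a global pairwise condition with every member of $B$, and a local dent near a point can create new crossings with boundaries of other members passing through that neighborhood; worse, you need to prescribe \emph{which} points are peeled off, which is precisely the control that the available tool does not provide. Lemma~\ref{Lemma:Rom1} lets you shrink $b$ continuously onto a chosen point $a$ while the whole family stays a pseudo-disc family, but gives no say in the order in which the other points are lost. This is why the paper settles for the weaker inequality $\lfloor d_b/t\rfloor\le x_b$ (every $a\in A\cap b$ lies in some canonical $t$-tuple contained in $b$, namely the one obtained by shrinking $b$ toward $a$ until exactly $t$ points remain, and each tuple covers at most $t$ points), which still gives $O(t^{4}n+tm)$ after the double count. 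To repair your argument, replace your $\mathcal{F}$ by the $t$-tuples realized by the shrunk family $B'$ and your per-$b$ bound by $\lfloor d_b/t\rfloor\le x_b$; the cyclic-sweep construction should be abandoned.
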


	\begin{proof}
		We use the following lemma proved by Pinchasi~\cite{Pin14}.
		\begin{lemma}[\cite{Pin14}]\label{Lemma:Rom1}
			Let $\D$ be a family of pseudo-discs. Let $d \in \D$ and let $x \in d$ be any point. Then $d$ can be continuously shrunk to the point $x$ in such a way that at each moment during the shrinking process, $\D$ continues to be a family of pseudo-discs.
		\end{lemma}
		
		We enlarge the family $B$ by shrinking each pseudo-disc $b \in B$ toward each $a \in A$ which is contained in $b$, and adding to $B$ representatives of all objects obtained in the process. Specifically, for each $a\in A$, we apply Lemma~\ref{Lemma:Rom1} sequentially, where every time we stop the shrinking process once the shrunk object `loses' the first point, add the shrunk object to the family, and repeat the process, starting with this `shrunk pseudo-disc' and the same point $a$. We say that two objects belong to the same equivalence class if they contain the same set of points. We define $B'$ to be a family that contains one representative from each equivalence class of objects encountered during the process (for all points $a \in A$). 
		Since the equivalence classes correspond to subsets of $A$, then the resulting family, $B'$, is a finite family of pseudo-discs.\footnote{This process, which has already become standard, is decribed in~\cite[Corollary~1]{Pin14}.} We call a $t$-tuple of points in $A$, $\{a_1,\ldots,a_t\}$, \emph{a canonical $t$-tuple} if there exists some $b \in B'$ such that $b \cap A=\{a_1,\ldots,a_t\}$.
		
		\iffalse
		We enlarge the family $B$ by shrinking each pseudo-disc $b \in B$ toward each $a \in A$ which is contained in $b$, and adding to $B$ a representative pseudo-disc from each equivalence class of the shrunk copies. Since the equivalence classes correspond to subsets of $A$, then the resulting family, $B'$, is a finite family of pseudo-discs. We call a $t$-tuple of points in $A$, $\{a_1,\ldots,a_t\}$, \emph{a canonical $t$-tuple} if there exists some $b \in B'$ such that $b \cap A=\{a_1,\ldots,a_t\}$.
		\fi
		
		Denote $B=\{b_1,b_1,\ldots,b_m\}$. For each $1 \leq i \leq m$, let $d_i$ be the number of points from $A$ that $b_i$ contains. Let 
		$$\F = \{  \{a_1,\ldots,a_t\} \subset A:    \exists b \in B' \mbox{ s.t. }b \cap A =  \{a_1,\ldots,a_t\}  \}$$
		be the collection of all the canonical $t$-tuples from $A$. Let $x_i$ ($1 \leq i \leq m$) be the number of $t$-tuples from $\F$ that are contained in $b_i$. 
		
		On the one hand, since each $t$-tuple in $\F$ is contained in at most $t-1$ elements of $B$, we have 
		$$	\Sigma_{i=1}^m x_i \leq (t-1) |\F|.$$
		By \cite{BPR13}, we have $|\F|=O(t^2n)$, and therefore,
		\begin{equation}\label{eq:pdUB}
			\Sigma_{i=1}^m x_i =O(t^3n).
		\end{equation}
		
		On the other hand, if some $b_i \in B$ contains at least $t$ points of $A$, then any $a \in A$ that is contained in $b_i$ belongs to at least one canonical $t$-tuple that is included in $B$ (since $b_i$ can be continuously shrunk towards $a$, so $B'$ contains some shrunk copy of it that contains exactly $t$ points of $A$). Therefore, $\forall 1 \leq i \leq m , \lfloor \frac{d_i}{t} \rfloor \leq x_i$. (Note that this inequality holds even when $d_i < t$). Hence,
		\begin{equation}\label{eq:pdLB}
			\Sigma_{i=1}^m (\frac{d_i}{t}-1)    \leq \left\lfloor \frac{d_i}{t} \right\rfloor  \leq \Sigma_{i=1}^m x_i .
		\end{equation}
		Combining (\ref{eq:pdUB}) and (\ref{eq:pdLB}) together and rearranging, we obtain
		$$|E(G_{A,B})| =  \Sigma_{i=1}^m d_i =O(t^4n+tm) .$$
		This completes the proof.
	\end{proof}

	\begin{remark}
		Theorem \ref{thm:pts_pd} holds if we replace $A$ with a collection of pairwise disjoint sets, as was done in \cite{KelS20}. 
	\end{remark}

\section*{Acknowledgments}

The authors are grateful to Aleksa Milojevi\'{c} for communicating to them the results of~\cite{HMST24}. The research presented in this paper was partially supported by the Israel Science Foundation (grant no.~1065/20) and by the United States -- Israel Binational Science Foundation (NSF-BSF grant no.~2022792).

	\bibliographystyle{plain}
	\bibliography{references-klan}

	\appendix
	
	\section{Comparison between Theorem~\ref{thm:main-boundedvc} and Theorem~\ref{thm:VCour}}
	\label{App:Fox}
	
	In this appendix we make several remarks on the relation between our Theorem~\ref{thm:VCour} and Theorem~\ref{thm:main-boundedvc} of Fox et al.~\cite{FPSSZ17}. Throughout this appendix, we assume w.l.o.g.~that the bound of Theorem~\ref{thm:main-boundedvc} is $O(nm^{1-1/d^*})$. The other case is handled in the same way.
	
	\medskip \noindent \textbf{The bounds are equal up to a constant in main cases of interest.}
	The bound of our Theorem~\ref{thm:VCour} matches the  bound of Theorem~\ref{thm:main-boundedvc} (up to constant factors) whenever $d^*>2$ and $n = \Omega(m^{2/d^*} \log m)$ (and in particular, when $d^*>2$ and $m=n$), as in these cases we have  
	$m^{1+1/d^*} \log m = O(nm^{1-1/d^*})$. 
	
	\medskip \noindent \textbf{Our bound can be enhanced by applying recursion.}
	In cases where our bound is weaker than the bound of Theorem~\ref{thm:main-boundedvc}, the term $O(m^{1+1/d^*}\log m)$ which comes from the contribution of `heavy' vertices can be reduced by applying subsequent steps of the recursive strategy (using the fact that having dual VC-dimension at most $d^*$ is a hereditary property). As these cases seem less interesting, we omit the calculation.   
	
	\medskip \noindent \textbf{The proof strategies are `dual' to each other.}
	Our strategy is somewhat `dual' to the proof strategy of Theorem~\ref{thm:main-boundedvc}. Roughly speaking, the central ingredient in the proof of Theorem~\ref{thm:main-boundedvc} is showing that there exists a vertex $a \in A$ whose degree is at most $O(nm^{-1/d^*})$. Then, the proof proceeds iteratively by removing such a `light' vertex along with all edges incident to it, and finding a new `light' vertex in the remaining graph. The process ends once all vertices of $A$ were removed, and so, all edges of the graph were counted. As the number of vertices in $A$ is reduced by 1 at every step, the total number of edges is at most $O(\sum_{i=1}^m ni^{-1/d^*})= O(nm^{1-1/d^*})$. 
	
	Our proof can also be viewed as an iterative `vertex removal' process, where we first remove the `light' vertices and then remove the `heavy' vertices. However, the vertices we remove belong to $B$ and not to $A$; at the first $n-O(n^{1/d^*})$ steps, vertices with degree less than $O(m^{1-1/d^*})$ are removed, and at the last $O(n^{1/d^*})$ steps, the degree of the removed vertices may be as large as $m$. The proof strategies cannot be combined, as the `vertex removal' processes are performed on different sides of the graph. (Of course, in our strategy we could perform the process on the other side of the graph, but then the bound would become $O(mn^{1-1/d})$ which we assumed above to be inferior to the bound $O(nm^{1-1/d^*})$).
	
	\medskip \noindent \textbf{The proof strategies can be combined when $n=m$ and $d^*=d$.}
	In the special case where $n=m$ and $d^*=d$, we can perform the `vertex removal' process at the same side as in the proof of Fox et al.~(since applying the process on both sides leads to the same bound). In this case, one can combine our proof with the proof of Fox et al.~to obtain a slightly stronger bound, by checking at each step, which strategy provides a lower degree of the `removed' vertex. At the first steps, the strategy of Fox et al.~might give a better bound (this depends on the hidden constants in the two proofs). When the number of remaining steps becomes $o(n)$, our strategy certainly becomes superior (as the degree it provides is still $O(n^{1-1/d})$, while the degree at Fox et al.'s strategy is already at a larger order of magnitude). At the last $O(n^{1/d})$ steps, the strategy of Fox et al.~is superior once again (as the bound we use at these steps is $n$). Thus, a combination of the strategies yields a better bound. A schematic graph of the upper bounds on the degrees of removed vertices in both strategies is given in Figure~\ref{fig:fig5}.
	
%	It should however be noted that in this special case, a stronger bound of $o(n^{2-1/d})$ was obtained by Janzer and Pohoata~\cite{JP20}.
	
	\begin{figure}[tb]		
		\begin{center}
			\scalebox{0.7}{
				\includegraphics{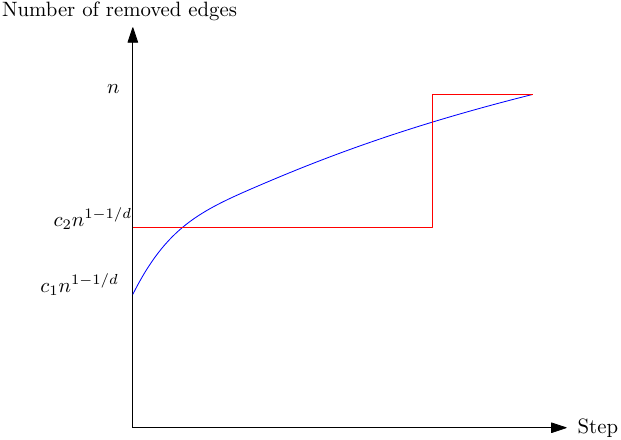}
			}
		\end{center}
		\caption{An illustration for the discussion in Appendix \ref{App:Fox} in the case $m=n, d=d^*$. The blue graph represents the method of \cite{FPSSZ17}, and the red one represents the method suggested in this work.}
		\label{fig:fig5}
	\end{figure}
	
\end{document}